\begin{document}
\theoremstyle{plain}
\newtheorem{thm}{Theorem}[section]
\newtheorem{theorem}[thm]{Theorem}
\newtheorem{lemma}[thm]{Lemma}
\newtheorem{corollary}[thm]{Corollary}
\newtheorem{proposition}[thm]{Proposition}
\newtheorem{conjecture}[thm]{Conjecture}
\newtheorem{definition}[thm]{Definition}
\newtheorem{acknowledgement}[thm]{Acknowledgement}
\theoremstyle{remark}
\newtheorem{construction}[thm]{Construction}
\newtheorem{notations}[thm]{Notations}
\newtheorem{question}[thm]{Question}
\newtheorem{problem}[thm]{Problem}
\newtheorem{remark}[thm]{Remark}
\newtheorem{claim}[thm]{Claim}
\newtheorem{assumption}[thm]{Assumption}
\newtheorem{assumptions}[thm]{Assumptions}
\newtheorem{properties}[thm]{Properties}
\newtheorem{example}[thm]{Example}
\newtheorem{comments}[thm]{Comments}
\newtheorem{blank}[thm]{}
\newtheorem{observation}[thm]{Observation}
\newtheorem{defn-thm}[thm]{Definition-Theorem}
\renewcommand{\bar}{\overline}
\newcommand{\eps}{\varepsilon}
\newcommand{\pa}{\partial}
\renewcommand{\phi}{\varphi}
\newcommand{\wt}{\widetilde}
\newcommand{\oo}{\mathcal O}
\newcommand{\ka}{K\"ahler }
\newcommand{\kar}{K\"ahler-Ricci}
\newcommand{\C}{{\mathbb C}}
\newcommand{\Z}{{\mathbb Z}}
\newcommand{\R}{{\mathbb R}}
\newcommand{\N}{{\mathbb N}}
\newcommand{\Q}{{\mathbb Q}}
\newcommand{\pz}{\partial_z}
\newcommand{\pzb}{\partial_{\bar z}}
\newcommand{\M}{{\mathcal M}}
\newcommand{\T}{{\mathcal T}}
\newcommand{\NN}{{\mathcal N}}
\newcommand{\XX}{{\widetilde{\mathfrak X}}}
\newcommand{\pzs}{\partial_{z_s}}
\newcommand{\pzbs}{\partial_{\bar{z_s}}}
\newcommand{\pp}{{\mathbb P}}
\newcommand{\ke}{K\"ahler-Einstein }
\newcommand{\hh}{{\mathcal H}}
\newcommand{\kk}{{\mathcal K}}
\newcommand{\tei}{Teichm\"uller }
\newcommand{\h}{{\mathbb H}}
\renewcommand{\tilde}{\widetilde}
\newcommand{\p}{{\Phi}}
\newcommand{\g}{{\mathfrak g}}
\newcommand{\kkk}{{\mathfrak k}}
\newcommand{\mkp}{{\mathfrak p}}
\newcommand{\lb}{\left (}
\newcommand{\rb}{\right )}

\def\dW{\mbox{diff\:}\times \mbox{Weyl\:}}
\def\End{\operatorname{End}}
\def\Hom{\operatorname{Hom}}
\def\Aut{\operatorname{Aut}}
\def\Diff{\operatorname{Diff}}
\def\im{\operatorname{im}}
\def\tr{\operatorname{tr}}
\def\Pr{\operatorname{Pr}}
\def\Z{\bf Z}
\def\O{\mathcal{O}}
\def\CP{\mathbb{C}\mathbb{P}}
\def\P{\bf P}
\def\Q{\bf Q}
\def\R{\bf R}
\def\C{\mathbb{C}}
\def\H{\bf H}
\def\Hil{\mathcal{H}}
\def\proj{\operatorname{proj}}
\def\id{\mbox{id\:}}
\def\a{\alpha}
\def\b{\beta}
\def\c{\gamma}
\def\p{\partial}
\def\f{\frac}
\def\i{\sqrt{-1}}
\def\t{\tau}
\def\T{\mathcal{T}}
\def\Kahler{K\"{a}hler\:}
\def\w{\omega}
\def\X{\mathfrak{X}}
\def\K{\mathcal {K}}
\def\m{\mu}
\def\M{\mathcal {M}}
\def\v{\nu}
\def\D{\mathcal{D}}
\def\U{\mathcal {U}}
\def\Omegak{\frac{1}{k!}\bigwedge\limits^k\mu\lrcorner\Omega}
\def\Omegakp{\frac{1}{(k+1)!}\bigwedge\limits^{k+1}\mu\lrcorner\Omega}
\def\Omegakpp{\frac{1}{(k+2)!}\bigwedge\limits^{k+2}\mu\lrcorner\Omega}
\def\Omegakm{\frac{1}{(k-1)!}\bigwedge\limits^{k-1}\mu\lrcorner\Omega}
\def\Omegakmm{\frac{1}{(k-2)!}\bigwedge\limits^{k-2}\mu\lrcorner\Omega}
\def\Omegakk{\Omega_{i_1,i_2,\cdots,i_k}}
\def\Omegakkp{\Omega_{i_1,i_2,\cdots,i_{k+1}}}
\def\Omegakkpp{\Omega_{i_1,i_2,\cdots,i_{k+2}}}
\def\Omegakkm{\Omega_{i_1,i_2,\cdots,i_{k-1}}}
\def\Omegakkmm{\Omega_{i_1,i_2,\cdots,i_{k-2}}}
\def\mukm{\frac{1}{(k-1)!}\bigwedge\limits^{k-1}\mu}
\def\sumk{\sum\limits_{i_1<i_2<,\cdots,<i_k}}
\def\sumkm{\sum\limits_{i_1<i_2<,\cdots,<i_{k-1}}}
\def\sumkmm{\sum\limits_{i_1<i_2<,\cdots,<i_{k-2}}}
\def\sumkp{\sum\limits_{i_1<i_2<,\cdots,<i_{k+1}}}
\def\sumkpp{\sum\limits_{i_1<i_2<,\cdots,<i_{k+2}}}
\def\Omegakb{\Omega_{i_1,\cdots,\bar{i}_t,\cdots,i_k}}
\def\Omegakmb{\Omega_{i_1,\cdots,\bar{i}_t,\cdots,i_{k-1}}}
\def\Omegakpb{\Omega_{i_1,\cdots,\bar{i}_t,\cdots,i_{k+1}}}
\def\Omegakt{\Omega_{i_1,\cdots,\tilde{i}_t,\cdots,i_k}}

\title{Applications of the affine structures on the Teichm\"uller spaces}

                      \author{Kefeng Liu}
                \address{Center of Mathematical Sciences, Zhejiang University, Hangzhou, Zhejiang 310027, China;
                 Department of Mathematics,University of California at Los Angeles,
                Los Angeles, CA 90095-1555, USA}
        \email{liu@math.ucla.edu, liu@cms.zju.edu.cn}
         \author{Yang Shen}
\address{Center of Mathematical Sciences, Zhejiang University, Hangzhou, Zhejiang 310027, China}
\email{syliuguang2007@163.com}

\author{Xiaojing Chen}
          \dedicatory{}
        \address{Department of Mathematics,University of California at Los Angeles,
               Los Angeles, CA 90095-1555, USA}
        \email{xjchen@math.ucla.edu}

 \begin{abstract} We prove the existence of global sections trivializing the Hodge bundles on the Hodge metric completion space of the Torelli space of Calabi--Yau manifolds, a global splitting property of these Hodge bundles. We also prove that a compact Calabi--Yau manifold can not be deformed to its complex conjugate. These results answer certain open questions in the subject. A general result about certain period map to be bi-holomorphic from the Hodge metric completion space of the Torelli space of Calabi--Yau type manifolds to their period domains is proved and applied to the cases of K$3$ surfaces, cubic fourfolds, and hyperk\"ahler manifolds.

\end{abstract}
\maketitle
\section*{Introduction}
\label{sec:Introduction}

As applications of our recent results about the affine structure on the Teichm\"uller spaces of Calabi--Yau manifolds, we answer certain open questions mentioned in the above abstract brought to us by Professors Bong Lian and Si Li affirmatively.

In our recent papers \cite{CGL, CGL2}, we studied the period maps and the Hodge metric completion spaces on the Torelli spaces of Calabi--Yau and Calabi--Yau type manifolds, respectively. 
In \cite{CGL, CGL2}, we have proved global Torelli theorems on the Torelli space of Calabi--Yau manifolds and Calabi--Yau type manifolds. Our method works without change for more general Calabi--Yau manifolds such as hyperk\"ahler manifolds, as long as their moduli spaces with certain level structure are smooth. 
Such Torelli problem has been studied for a long time. One may refer to \cite{CGL3} for a brief summary of  the history of Torelli problem. 

The main object we are considering in this paper is Calabi--Yau manifold. In this paper, a compact projective manifold $M$ of complex dimension $n\geq 2$ is called \textit{Calabi--Yau}, if it has a trivial canonical bundle and satisfies $H^i(M,\mathcal{O}_M)=0$ for $0<i<n$. Although we mainly focus on the study of the moduli spaces of polarized and marked Calabi--Yau manifolds, we remark that our method in this paper works without change for more general Calabi--Yau manifolds such as hyperk\"ahler manifolds, as well as Calabi--Yau type manifolds such as cubic fourfolds. 

The key property we proved in \cite{CGL} will be recalled in Theorem \ref{image} in $\S$2 in this paper. Based on this theorem, we were able to construct the holomorphic affine structure on the Teichm\"uller spaces as well as on  the Hodge metric completion space of Torelli space, which is a connected component of the moduli space of polarized and marked  Calabi-Yau manifolds, and thus prove the injectivity of the period map on the Torelli space and on its Hodge metric completion space. We will give a brief review of the definitions of basic concepts in $\S$1, and give a sketch of the construction of holomorphic affine structures on the Teichm\"uller spaces in $\S$2. In $\S$3, we first describe the construction of the Hodge metric completion spaces of the Torelli space, and conclude with a global Torelli theorem on the Torelli space. One may refer to \cite{CGL} for more details.

The main purpose of  this paper is to use our results to study period maps and Hodge bundles on the Torelli space of Calabi--Yau manifolds and its Hodge metric completion space.

In Section \ref{surjectivity}, we will first prove a simple but general result in Theorem \ref{surj} concerning when the extended period map is a bi-holomorphic map, and apply it to give a simple proof of the surjectivity of the extended period map on Hodge metric completion of the Torelli space of K$3$ surfaces and cubic fourfolds. Such surjectivity results of the extended period maps may be obtained for the case of  hyperk\"ahler manifolds with weight 2 primitive cohomology, as well as certain refined period maps on cubic threefolds. 

%

In Section \ref{globalSection}, we let $F^k$ with $0\leq k\leq n$ denote the Hodge bundles on the Hodge metric completion of the Torellli space $\mathcal{T}^H$, and prove that all the Hodge bundles over $\mathcal{T}^H$ for Calabi--Yau manifolds are trivial bundles, and the trivialization can be obtained by the canonical sections in \eqref{sections}.  These canonical sections are explicitly constructed in Section \ref{globalSection} by using unipotent matrices in our proof of the affine structure on $\mathcal{T}^H$.
Moreover, let $p\in\mathcal{T}^{H}$ be a base point with the corresponding Calabi--Yau manifolds $M_p$. Let $\phi(\tau)$ denote the Kuranishi family of Beltrami differential for the local deformation of complex structure of a Calabi--Yau manifold $M_p$, and let $[\Omega^c_p(\tau)]=[e^{\phi(\tau)}\lrcorner\Omega_p]$ denote the local canonical family of holomorphic $(n,0)$-classes around the base point $p$ in $\mathcal{T}^{H}$. Then we prove that
there is a global holomorphic section $s_0$ of the Hodge bundle $F^n$ on $\mathcal{T}^H$ which extends the local canonical holomorphic section $[\Omega^c_p(\tau)]$.

In Section \ref{split}, we still consider the Hodge metric completion space of the Torelli space $\mathcal{T}^{H}$ of Calabi--Yau manifolds and let $p\in\mathcal{T}^{H}$ be the base point. Let $F^k_q$ denote the fiber of the Hodge bundle $F^k$ at $q$ for any point $q\in \mathcal{T}^H$. We show that  for any two points $p$ and $q$ in $\mathcal{T}^H$ and $1\leq k\leq n$, we have that $H^n(M, \mathbb{C})=F^k_p\oplus \bar{{F}^{n-k+1}_q}$.

Finally we let $M_q$ denote a compact polarized and marked Calabi--Yau manifold with complex structure $J$, and $\bar{M_{q}}$ denote the corresponding polarized and marked Calabi--Yau manifold with conjugate complex structure $-J$, then we prove that if $q\neq q'$ are two distinct points in the Torelli space $\mathcal{T}'$, then $M_{q'}\neq \bar{M_{q}}$.
Here $M_{q'}\neq \bar{M_{q}}$ means that the two polarized and marked Calabi--Yau manifolds can not be identical in the Torelli space $\mathcal{T}'$. 
Equivalently this implies that $M_{q}$ can not be deformed to its complex conjugate manifold $\bar{M_{q}}$. 


We would like to thank Professors Si Li and Bong Lian for stimulating discussions and many helpful suggestions.

\section{Period maps on moduli spaces}

This section is a review of basic definitions of Calabi--Yau manifolds, as well as their moduli, Tecihm\"uller, and Torelli spaces. Then we study the period maps on them. 
Basic results about the moduli, Tecihm\"uller, and Torelli spaces of Calabi--Yau manifolds are briefly recalled in Section 1.1. In Section 1.2 basic properties of period maps are also reviewed for reader's convenience.

\subsection{Moduli, Torelli and Teichm\"{u}ller space}\label{moduli and teichmuller}
A compact projective manifold $M$ of complex dimension $n\geq 2$ is called \textit{Calabi--Yau}, if it has a trivial canonical bundle and satisfies $H^i(M,\mathcal{O}_M)=0$ for $0<i<n$. Let $M$ be a Calabi--Yau manifold with $\dim_{\mathbb{C}}M=n$. Let $L$ be an ample line bundle over $M$. We call the pair $(M,L)$ a polarized projective manifold.

The \textit{moduli space} $\mathcal{M}$ of polarized complex structures on a given differential manifold $X$ is a complex analytic space consisting of biholomorphically equivalent pairs $(M, L)$ of complex structures and ample line bundles. Let us denote by $[M, L]$ the point in $\mathcal{M}$ corresponding to a pair $(M, L)$, where $M$ is a complex manifold diffeomorphic to $X$ and $L$ is an ample line bundle on $M$. If there is a biholomorphic map $f$ between $M$ and $M'$ with $f^*L'=L$, then $[M, L]=[M', L']\in\mathcal{M}$. One may also look at \cite{Popp} and \cite{v1} for more details about the construction of moduli spaces of polarized projective manifolds.

Let $(M,L)$ be a polarized Calabi--Yau manifold.  
We fix a lattice $\Lambda$ with a pairing $Q_{0}$, where $\Lambda$ is isomorphic to $H^n(M_{0},\mathbb{Z})/\text{Tor}$ for some Calabi--Yau manifold $M_{0}$ and $Q_{0}$ is defined by the cup-product.
For a polarized Calabi--Yau manifold $(M,L)$, we define a marking $\gamma$ as an isometry of the lattices
\begin{align}\label{marking}
\gamma :\, (\Lambda, Q_{0})\to (H^n(M,\mathbb{Z})/\text{Tor},Q).
\end{align}
A \textit{polarized and marked} projective manifold is a triple $(M,L,\gamma)$ consisting of a projective manifold $M$, an ample line bundle $L$ over $M$, and a marking $\gamma$.

For any integer $m\geq 3$, we follow the definition of Szendr\"oi \cite{sz} to define an $m$-equivalent relation of two markings on $(M,L)$ by 
$$\gamma\sim_{m} \gamma' \text{ if and only if } \gamma'\circ \gamma^{-1}-\text{Id}\in m \cdot\text{End}(H^n(M,\mathbb{Z})/\text{Tor},Q),$$
and denote $[\gamma]_{m}$ to be the set of the $m$-equivalent classes of $\gamma$.
Then we call $[\gamma]_{m}$ a level $m$
structure on the polarized Calabi--Yau manifold $(M,L)$.
For deformation of
polarized Calabi--Yau manifold with level $m$ structure, we reformulate Theorem~2.2 in
\cite{sz} to the following theorem, in which we only put the statements we need in this paper. One can
also look at \cite{Popp} and \cite{v1} for more details about the
construction of moduli spaces of Calabi--Yau manifolds.

\begin{theorem}\label{Szendroi theorem 2.2}
Let $M$ be a polarized Calabi--Yau manifold with level
$m$ structure with $m\geq 3$. Then there exists a connected quasi-projective complex manifold
$\mathcal{Z}_m$ with a universal  family of Calabi--Yau manifolds,
$\mathcal{X}_{\mathcal{Z}_m}\rightarrow \mathcal{Z}_m,$
which contains $M$ as a fiber and is polarized by an ample line bundle
$\mathcal{L}_{\mathcal{Z}_m}$ on $\mathcal{X}_{\mathcal{Z}_m}$.
\end{theorem}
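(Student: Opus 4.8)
The statement is a convenient reformulation of \cite[Theorem~2.2]{sz}, so the plan is to reconstruct Szendr\"oi's argument; it rests on two classical ingredients, Viehweg's construction of quasi-projective moduli for polarized manifolds (cf. \cite{v1,Popp}) and the rigidifying effect of a level $m$ structure when $m\ge 3$. First I would set up the moduli problem inside a Hilbert scheme: after replacing $L$ by a fixed sufficiently divisible power, every polarized Calabi--Yau manifold $(M,L)$ in the relevant deformation class embeds in a fixed projective space $\mathbb{P}^N$ with one and the same Hilbert polynomial $P$, so there is a locally closed subscheme $H\subset\mathrm{Hilb}_P(\mathbb{P}^N)$ whose points parametrize precisely the smooth Calabi--Yau subvarieties of this kind. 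The group $G=\mathrm{PGL}(N+1)$ acts on $H$ in such a way that two points lie in a common orbit exactly when the associated polarized manifolds are isomorphic, and, since deformations of Calabi--Yau manifolds are unobstructed (Bogomolov--Tian--Todorov), $H$ is smooth and carries the tautological family $\pi\colon\mathcal{X}_H\to H$.

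Next I would bring in the marking. Over $H$ the sheaf $R^n\pi_*(\mathbb{Z}/m\mathbb{Z})$ is a finite local system, so the level $m$ structures on the fibres organize into a finite \'etale cover $\rho\colon H_m\to H$, still smooth and again a $G$-variety. The heart of the argument --- and the step I expect to be the only genuine obstacle --- is to prove that for $m\ge 3$ the $G$-action on $H_m$ is \emph{free}. This is a rigidity lemma of Serre type. First, $\mathrm{Aut}(M,L)$ is finite: triviality of the canonical bundle gives $T_M\cong\Omega_M^{n-1}$, whence $H^0(M,T_M)\cong H^{n-1,0}(M)=\overline{H^{n-1}(M,\mathcal{O}_M)}=0$ by the Calabi--Yau condition, so $\mathrm{Aut}(M)$ is discrete and $\mathrm{Aut}(M,L)$ is finite. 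Second, this finite group acts faithfully on $H^n(M,\mathbb{Z}/m\mathbb{Z})$ for $m\ge 3$: an automorphism acting trivially there preserves the generator of the one-dimensional space $H^{n,0}(M)$, hence fixes a nowhere-vanishing holomorphic $n$-form, and a finite-order automorphism with that property must be the identity. Consequently no non-trivial element of $G$ fixes a point of $H_m$, since it would otherwise be a non-trivial automorphism of the fibre fixing both the polarization and the level $m$ structure.

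With freeness in hand I would invoke the standard existence theorems for geometric quotients by free (proper) actions of reductive groups on quasi-projective schemes (Viehweg, cf. \cite{v1}; see also \cite{Popp}): the quotient $\mathcal{Z}_m:=H_m/G$ exists as a quasi-projective scheme, the projection $H_m\to\mathcal{Z}_m$ is a $G$-torsor, and hence $\mathcal{Z}_m$ is smooth because $H_m$ is. Triviality of the stabilizers makes the tautological family $G$-equivariant on $H_m$, so it descends along the torsor to a family $\mathcal{X}_{\mathcal{Z}_m}\to\mathcal{Z}_m$ which is universal by construction, polarized by the line bundle $\mathcal{L}_{\mathcal{Z}_m}$ obtained by descending a suitable power of $\mathcal{O}_{\mathbb{P}^N}(1)$. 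Finally I would pass to the connected component of $\mathcal{Z}_m$ containing the point determined by $(M,L,[\gamma]_m)$; over that point the fibre of $\mathcal{X}_{\mathcal{Z}_m}$ is $M$, so this component is exactly the connected quasi-projective manifold with universal family asserted by the theorem. In practice, of course, one simply quotes \cite[Theorem~2.2]{sz}; the only thing that really has to be verified is that the Calabi--Yau hypotheses used above coincide with those under which \cite{sz} operates.
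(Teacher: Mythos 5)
The paper itself offers no proof of this statement: it is explicitly presented as a reformulation of Szendr\"oi's Theorem~2.2 in \cite{sz}, whose proof (building on Popp and Viehweg) runs exactly along the lines you sketch --- Hilbert scheme, $\mathrm{PGL}$-action, rigidification by the level $m$ structure, quasi-projective geometric quotient. So your overall architecture is the right one and matches the quoted source; the finiteness of $\operatorname{Aut}(M,L)$ via $H^0(M,T_M)\cong H^{n-1,0}(M)=0$ is also correct.

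The genuine gap is in the step you yourself single out as the crux: freeness of the $G$-action on $H_m$. Your argument is that an automorphism $g$ acting trivially on $H^n(M,\mathbb{Z}/m\mathbb{Z})$ preserves the generator of $H^{n,0}(M)$, and that ``a finite-order automorphism fixing a nowhere-vanishing holomorphic $n$-form must be the identity.'' That last assertion is false: symplectic automorphisms of K$3$ surfaces (e.g.\ Nikulin involutions) are nontrivial finite-order automorphisms with $g^*\Omega=\Omega$, and K$3$ surfaces are precisely the $n=2$ case of the paper's definition of Calabi--Yau manifold; analogous examples exist in higher dimension. What the hypothesis actually yields, via Serre's lemma (the eigenvalues of $g^*$ on $H^n(M,\mathbb{Z})/\mathrm{Tor}$ are roots of unity $\lambda$ with $(\lambda-1)/m$ an algebraic integer, hence $\lambda=1$ for $m\ge 3$, and a finite-order unipotent transformation is the identity), is that $g^*=\mathrm{id}$ on the whole lattice $H^n(M,\mathbb{Z})/\mathrm{Tor}$ --- much stronger than triviality on $H^{n,0}$ --- and the statement you then need is that $\operatorname{Aut}(M,L)$ acts \emph{faithfully} on $H^n(M,\mathbb{Z})/\mathrm{Tor}$. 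That faithfulness is a theorem for K$3$ surfaces (a consequence of the Torelli theorem) but is a genuinely delicate point for general Calabi--Yau $n$-folds, and it is exactly the issue Szendr\"oi must confront in \cite{sz}: possible automorphisms acting trivially on the middle cohomology are what make the existence of a \emph{universal} (rather than merely coarse) family nontrivial. As written, your rigidity lemma does not close this gap, so the descent of the tautological family from $H_m$ to $\mathcal{Z}_m$ is not justified; the remedy is either to prove faithfulness on $H^n(M,\mathbb{Z})/\mathrm{Tor}$ for the class of manifolds considered or to fall back on the precise hypotheses under which \cite{sz} establishes it.
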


For $m\ge 3$, let $\T_{m}$ be the universal covering space of $\mathcal{Z}_{m}$.
It is proved in \cite[$\S$2]{CGL} that $\T_{m}$ is bi-holomorphic to $\T_{m'}$ for any integer $m,m'\ge 3$.
Hence we denote by $\T$ the universal covering space of $\mathcal{Z}_{m}$ for any $m\ge 3$ and call $\T$ the Teichm\"uller space of Calabi--Yau manifolds.
Let $\pi_m:\, \T \to \mathcal{Z}_{m}$ be the universal covering map. 
We then have the pull-back family $\phi:\, \mathcal{U}\rightarrow \mathcal{T}$ of $\mathcal{X}_{\mathcal{Z}_m}\rightarrow \mathcal{Z}_m$ via the covering map $\pi_{m}$.

\begin{proposition}\label{imp}
The Teichm\"uller space $\mathcal{T}$ is a connected and simply connected smooth complex
manifold and the family
$\phi:\,\mathcal{U}\rightarrow\mathcal{T},$
which contains $M$ as a fiber, is universal at each point of
$\mathcal{T}$.
\end{proposition}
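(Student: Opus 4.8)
The plan is to deduce the statement from Theorem~\ref{Szendroi theorem 2.2}, from \cite[$\S$2]{CGL} (where it is shown that the universal cover $\mathcal{T}$ is, up to bi-holomorphism, independent of $m\ge 3$), and from standard facts of covering space theory and Kuranishi deformation theory. By Theorem~\ref{Szendroi theorem 2.2}, $\mathcal{Z}_m$ is a connected quasi-projective complex manifold, hence a connected, locally path-connected and locally contractible topological space, so it admits a universal covering space, which by construction is connected and simply connected. Since $\mathcal{T}$ is this universal covering space (for every $m\ge 3$, by \cite[$\S$2]{CGL}), $\mathcal{T}$ is connected and simply connected. The covering map $\pi_m\colon\mathcal{T}\to\mathcal{Z}_m$ is a local homeomorphism, so there is a unique complex manifold structure on $\mathcal{T}$ making $\pi_m$ holomorphic; with respect to it $\pi_m$ is a local bi-holomorphism, and since $\mathcal{Z}_m$ is a smooth complex manifold so is $\mathcal{T}$, of the same dimension. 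Moreover $\mathcal{U}$ contains a fiber isomorphic to $M$ because $\mathcal{X}_{\mathcal{Z}_m}$ does and $\pi_m$ is surjective.

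It remains to prove that $\phi$ is universal at each point. The essential local input is that a Calabi--Yau manifold $M$ has no nonzero holomorphic vector fields: contraction with a nowhere-vanishing holomorphic $n$-form identifies $T_M$ with $\Omega_M^{n-1}$, so $H^0(M,T_M)\cong H^0(M,\Omega_M^{n-1})\cong H^{n-1,0}(M)\cong\overline{H^{n-1}(M,\mathcal{O}_M)}$, and the last space vanishes because $n\ge 2$ and $H^i(M,\mathcal{O}_M)=0$ for $0<i<n$. Hence the Kuranishi family of each fiber, which is always semi-universal, is in fact universal. Because a level $m$ structure with $m\ge 3$ rigidifies polarized Calabi--Yau manifolds, the moduli space carries a genuine universal family, and over a sufficiently small neighborhood of any point $z\in\mathcal{Z}_m$ the family $\mathcal{X}_{\mathcal{Z}_m}\to\mathcal{Z}_m$ is isomorphic to the Kuranishi family of the fiber $M_z$; this is precisely the assertion that $\mathcal{X}_{\mathcal{Z}_m}\to\mathcal{Z}_m$ is universal at every one of its points.

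Finally I would transfer this along $\pi_m$: given $t\in\mathcal{T}$, choose an open $V\ni t$ that $\pi_m$ maps bi-holomorphically onto an open neighborhood $U$ of $\pi_m(t)$; by definition of the pull-back, $\phi\colon\mathcal{U}\to\mathcal{T}$ over $V$ is isomorphic to $\mathcal{X}_{\mathcal{Z}_m}\to\mathcal{Z}_m$ over $U$, which is universal at $\pi_m(t)$, so $\phi$ is universal at $t$; since $t$ is arbitrary we are done. The one step requiring genuine care is the universality claim in the previous paragraph: one must use the construction of the level $m$ moduli space to identify its germ at each point with the corresponding Kuranishi space, and invoke $H^0(M,T_M)=0$ to upgrade semi-universality to universality. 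Everything else --- connectedness, simple connectedness, the complex manifold structure, the passage to the universal cover, and the stability of universality under pull-back along a local bi-holomorphism --- is routine bookkeeping.
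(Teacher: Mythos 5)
Your argument is correct and follows essentially the route the paper intends: the paper states Proposition~\ref{imp} without proof, recalling it from \cite{CGL}, and the proof there is exactly this combination of covering-space theory applied to $\mathcal{Z}_m$, the universality of $\mathcal{X}_{\mathcal{Z}_m}\to\mathcal{Z}_m$ from Theorem~\ref{Szendroi theorem 2.2}, the vanishing $H^0(M,T_M)\cong H^{n-1,0}(M)=0$ to upgrade versality of the Kuranishi family to universality, and the invariance of local universality under pull-back along the local bi-holomorphism $\pi_m$. No gaps.
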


We now define the \textit{Torelli space} $\mathcal{T}'$ to be a component of the  complex analytic space consisting of biholomorphic equivalent triples of $(M, L, \gamma)$. To be more precise, 
for two triples $(M, L, \gamma)$ and $(M', L', \gamma'\})$, if there exists a biholomorphic map $f:\,M\to M'$ with
\begin{align*}
f^*L'&=L,\quad \quad
f^*\gamma'=\gamma,
\end{align*} 
where $f^*\gamma'$ is given by $\gamma':\, (\Lambda, Q_{0})\to (H^n(M',\mathbb{Z})/\text{Tor},Q)$ composed with 
$$f^*:\, (H^n(M',\mathbb{Z})/\text{Tor},Q)\to (H^n(M,\mathbb{Z})/\text{Tor},Q),$$
then $[M, L, \gamma]=[M', L',\gamma']\in \mathcal{T}'$. 

More precisely we define the Torelli space as follows.

\begin{definition}
The Torelli space $\T'$ of Calabi--Yau manifolds is the connected component of the moduli space of equivalent classes of polarized and marked Calabi--Yau manifolds, which contains $(M, L)$.
\end{definition}

There is a natural covering map $\pi_m':\,\mathcal{T}'\to\mathcal{Z}_m$ by mapping $(M, L, \gamma)$ to $(M, L, [\gamma]_{m})$. From this we see easily that $\T'$ is a smooth and connected complex manifold.
We can also get a pull-back universal family $\phi':\, \U'\to \T'$ on the Torelli space $\T'$ via the covering map $\pi_m'$.

Recall that the Teichm\"uller space $\T$ is defined to be the universal covering space of $\mathcal{Z}_{m}$ with covering map $\pi_{m}:\, \T\to \mathcal{Z}_{m}$. Then we can lift $\pi_{m}$ via the covering map $\pi_m':\,\mathcal{T}'\to\mathcal{Z}_m$ to get a covering map $\pi:\, \T\to \T'$, such that the following diagram commutes.
\begin{equation}\label{cover1}
\xymatrix{
\T \ar[dr]^-{\pi} \ar[dd]^-{\pi_m} &\\
&\T'\ar[dl]^-{\pi_m'}\\
\mathcal{Z}_{m} &\ .}
\end{equation}

\subsection{Period domain and period map}
We will give a general review about Hodge structures and period domain. One may refer to $\S3$ in \cite{schmid1} for more details.
For a polarized and marked Calabi--Yau manifold $M$ with background
smooth manifold $X$, the marking identifies
$H^n(M,\mathbb{Z})/\text{Tor}$ with a fixed lattice $\Lambda$. This
gives us a canonical identification of the middle dimensional de
Rham cohomology of $M$ to that of the background manifold $X$, that is,
$H^n(M)\cong H^n(X),$
where the coefficient ring can be ${\mathbb{Q}}$, ${\mathbb{R}}$ or
${\mathbb{C}} $. Since the polarization $L$ is an integer class, it defines a map
$L:\, H^n(X,{\mathbb{Q}})\to H^{n+2}(X,{\mathbb{Q}}),\quad A\mapsto L\wedge A.$
We denote by $H_{pr}^n(X)=\ker(L)$ the primitive cohomology groups
where, the coefficient ring can also be ${\mathbb{Q}}$, ${\mathbb{R}}$ or ${\mathbb{C}%
}$. We let $H_{pr}^{k,n-k}(M)=H^{k,n-k}(M)%
\cap H_{pr}^n(M,{\mathbb{C}})$ and denote its dimension by
$h^{k,n-k}$. We have the Hodge decomposition
$H_{pr}^n(M,{\mathbb{C}})=H_{pr}^{n,0}(M)\oplus\cdots\oplus
H_{pr}^{0,n}(M)$, such that $H_{pr}^{n-k,k}(M)=\bar{H_{pr}^{k,n-k}(M)}$, $0\le k\le n$.
The Poincar\'e bilinear form $Q$ on $H_{pr}^n(X,{\mathbb{Q}})$ is
defined by
\begin{equation*}
Q(u,v)=(-1)^{\frac{n(n-1)}{2}}\int_X u\wedge v
\end{equation*}
for any $d$-closed $n$-forms $u,v$ on $X$.
Let $f^k=\sum_{i=k}^nh^{i,n-i}$, $f^0=m$, and
$F^k=F^k(M)=H_{pr}^{n,0}(M)\oplus\cdots\oplus H_{pr}^{k,n-k}(M)$,
from which we have the decreasing filtration
$H_{pr}^n(M,{\mathbb{C}})=F^0\supset\cdots\supset F^n.$ We know that
\begin{align}
&\dim_{\mathbb{C}} F^k=f^k,  \label{cl45}\\
& H^n_{pr}(X,{\mathbb{C}})=F^{k}\oplus \bar{F^{n-k+1}}, \text{ and }
H_{pr}^{k,n-k}(M)=F^k\cap\bar{F^{n-k}}.\label{cl46}
\end{align}
In terms of the Hodge filtration, the Poincar\'e bilinear form satisfies the Hodge-Riemann relations
\begin{align}
&Q\left ( F^k,F^{n-k+1}\right )=0, \quad\text{and}\quad\label{cl50}\\
&Q\left ( Cv,\bar v\right )>0 \quad\text{if}\quad v\ne 0,\label{cl60}
\end{align}
where $C$ is the Weil operator given by $Cv=\left (\sqrt{-1}\right
)^{2k-n}v$ for $v\in H_{pr}^{k,n-k}(M)$. The period domain $D$
for polarized Hodge structures with data \eqref{cl45} is the space
of all such Hodge filtrations
$D=\left \{ F^n\subset\cdots\subset F^0=H_{pr}^n(X,{\mathbb{C}})\mid %
\eqref{cl45}, \eqref{cl50} \text{ and } \eqref{cl60} \text{ hold}
\right \}.$
The compact dual $\check D$ of $D$ is
$\check D=\left \{ F^n\subset\cdots\subset F^0=H_{pr}^n(X,{\mathbb{C}})\mid %
\eqref{cl45} \text{ and } \eqref{cl50} \text{ hold} \right \}.$
The period domain $D\subseteq \check D$ is an open subset. 
\begin{remark}We remark the notation change for the primitive cohomology groups. For simplicity, we will use $H^n(M,\mathbb{C})$ and $H^{k, n-k}(M)$ to denote the primitive cohomology groups $H^n_{pr}(M,\mathbb{C})$ and $H_{pr}^{k, n-k}(M)$ respectively. Moreover, we will use cohomology to mean primitive cohomology in the rest of the paper.
\end{remark}

For the family $f_m : \mathcal{X}_{\mathcal{Z}_{m}} \to \mathcal{Z}_{m}$, we denote each fiber by $M_s=f_m^{-1}(s)$ and $F_s^k=F^k(M_s)$ for any $s\in \mathcal{Z}_{m}$. With some fixed point $s_0\in \mathcal{Z}_m$, the period map is defined as a morphism $\Phi_{\mathcal{Z}_m} :\mathcal{Z}_m \to D/\Gamma$ by
\begin{equation}\label{perioddefinition}
s \mapsto \tau^{[\gamma_s]}(F^n_s\subseteq\cdots\subseteq F^0_s)\in D,
\end{equation}
where $\tau^{[\gamma_s]}$ is an isomorphism between $\C-$vector spaces
$$\tau^{[\gamma_s]}:\, H^n(M_s,\mathbb{C})\to H^n(M_{s_0},\mathbb{C}),$$
which depends only on the homotopy class $[\gamma_s]$ of the curve $\gamma_s$ between $s$ and $s_0$. Then the period map is well-defined with respect to the monodromy representation 
$\rho : \pi_1(\mathcal{Z}_m)\to \Gamma \subseteq \text{Aut}(H_{\mathbb{Z}},Q)$. It is well-known that the period map has the following properties:
\begin{enumerate}
\item locally liftable;
\item holomorphic, i.e. $\partial F^i_z/\partial \bar{z}\subset F^i_z$, $0\le i\le n$;
\item Griffiths transversality: $\partial F^i_z/\partial z\subset F^{i-1}_z$, $1\le i\le n$.
\end{enumerate}

Since (1) and $\T$ is the universal covering space of $\mathcal{Z}_{m}$, we can lift the period map to $\Phi : \T \to D$ such that the diagram
$$\xymatrix{
\T \ar[r]^-{\Phi} \ar[d]^-{\pi_m} & D\ar[d]^-{\pi}\\
\mathcal{Z}_m \ar[r]^-{\Phi_{\mathcal{Z}_m}} & D/\Gamma
}$$
is commutative. From the definition of marking in \eqref{marking}, we also have a well-defined period map $\Phi':\, \T'\to D$ from the Torelli space $\T'$ by defining 
\begin{equation*}
p \mapsto \gamma_{p}^{-1}(F^n_p\subseteq\cdots\subseteq F^0_p)\in D,
\end{equation*}
where the triple $(M_{p}, L_{p}, \gamma_{p})$ is the fiber over $p\in \T'$ of the family $\phi':\, \U'\to \T'$. Then we have the following commutative diagram
\begin{align}\label{periods}
\xymatrix{
\T \ar[dr]^-{\pi}\ar[rr]^-{\Phi} \ar[dd]^-{\pi_m} && D\ar[dd]^-{\pi_{D}}\\
&\T'\ar[ur]^-{\Phi'}\ar[dl]_-{\pi'_{m}}&\\
\mathcal{Z}_{m} \ar[rr]^-{\Phi_{\mathcal{Z}_{m}}} && D/\Gamma,
}
\end{align}
where the  maps $\pi_{m}$, $\pi'_{m}$ and $\pi$ are all natural covering maps between the corresponding spaces as in \eqref{cover1}.


\section{Affine structures on Teichm\"uller spaces}\label{affine structure}

This section reviews our construction of the affine structure on the Teichm\"uller spaces of Calabi--Yau manifolds.

We will first give a general review about Hodge structures and period domain from Lie group point of view. One may refer to \cite{GS} and \cite{schmid1} for more details.
Let us simply denote $H_{\mathbb{C}}=H^n(M, \mathbb{C})$ and $H_{\mathbb{R}}=H^n(M, \mathbb{R})$. The group of the $\mathbb{C}$-rational points is
$G_{\mathbb{C}}=\{ g\in GL(H_{\mathbb{C}})|~ Q(gu, gv)=Q(u, v) \text{ for all } u, v\in H_{\mathbb{C}}\},$
which acts on $\check{D}$ transitively. The group of real points in $G_{\mathbb{C}}$ is
$G_{\mathbb{R}}=\{ g\in GL(H_{\mathbb{R}})|~ Q(gu, gv)=Q(u, v) \text{ for all } u, v\in H_{\mathbb{R}}\},$
which acts transitively on $D$ as well. Then we have the following identification,
\begin{align*}
\check{D}\simeq G_\mathbb{C}/B, \ \ \text{ with }\ \  B=\{ g\in G_\mathbb{C}|~ gF^k_p=F^k_p, \text{ for any } k\}.
\end{align*}
Similarly, one obtains an analogous identification
$D\simeq G_\mathbb{R}/V \text{ with }V=G_\mathbb{R}\cap B.$
The Lie algebra $\mathfrak{g}$ of the simple complex Lie group $G_{\mathbb{C}}$ can be described as
\begin{align*}
\mathfrak{g}=\bigoplus_{k\in \mathbb{Z}} \mathfrak{g}^{k, -k}, \text{ with }\mathfrak{g}^{k, -k}=\{X\in \mathfrak{g}|XH_p^{r, n-r}\subseteq H_p^{r+k, n-r-k}\}.
\end{align*}
It is a simple complex Lie algebra, which contains
$\mathfrak{g}_{0}=\{X\in \mathfrak{g}|~ XH_{\mathbb{R}}\subseteq H_\mathbb{R}\}$
as a real form, i.e. $\mathfrak{g}=\mathfrak{g}_0\oplus i \mathfrak{g}_0.$ With the inclusion $G_{\mathbb{R}}\subseteq G_{\mathbb{C}}$, $\mathfrak{g}_0$ becomes Lie algebra of $G_{\mathbb{R}}$.
The Lie algebra $\mathfrak{b}$ of $B$ is then $\mathfrak{b}=\bigoplus_{k\geq 0} \mathfrak{g}^{k, -k}.$
By left translation via $G_{\mathbb{C}}$, $\mathfrak{b}\oplus\mathfrak{g}^{-1,1}/\mathfrak{b}$ gives rise to a $G_{\mathbb{C}}$-invariant holomorphic subbundle of the holomorphic tangent bundle $T^{1,0}\check{D}$. It will be denoted by $T^{1,0}_{h}\check{D}$. The horizontal tangent subbundle, restricted to $D$, determines a  subbundle $T_{h}^{1, 0}D$ of the holomorphic tangent bundle $T^{1, 0}D$ of $D$. In \cite{schmid1}, a holomorphic mapping $\Psi: \,{M}\rightarrow \check{D}$ of a complex manifold $M$ into $\check{D}$ is called \textit{horizontal} if at each point of $M$, the induced map between the holomorphic tangent spaces takes values in the appropriate fibre of $T^{1,0}_{h}\check{D}$. It is not hard to see that the period map $\Phi: \, \mathcal{T}\rightarrow D$ is a horizontal map with $\Phi_*(T^{1,0}_p\mathcal{T})\subseteq T^{1,0}_{o,h}D$ for any $p\in \mathcal{T}$ and $o=\Phi(p)\in D$.

Let us consider the nilpotent Lie subalgebra $\mathfrak{n}_+:=\oplus_{k\geq 1}\mathfrak{g}^{-k,k}$. Then one gets the holomorphic isomorphism $\mathfrak{g}/\mathfrak{b}\cong \mathfrak{n}_+$ which also induces a metric on $\mathfrak{n}_+$. 
Since $D$ is an open set in $\check{D}$, we have the following relation:
$T^{1,0}_{o, h}D= T^{1, 0}_{o, h}\check{D}\cong\mathfrak{b}\oplus \mathfrak{g}^{-1, 1}/\mathfrak{b}\hookrightarrow \mathfrak{g}/\mathfrak{b}\cong \mathfrak{n}_+.$

We take the unipotent group $N_+=\exp(\mathfrak{n}_+)$. Then $N_+\simeq \mathfrak{n}_+\simeq \mathbb{C}^d$ for some $d$ with the induced Euclidean metric from $\mathfrak{n}_+$. We remark that with a fixed base point, we can identify $N_+$ with its unipotent orbit in $\check{D}$ by identifying an element $c\in N_+$ with $[c]=cB$ in $\check{D}$; that is, $N_+=N_+(\text{ base point })\cong N_+B/B\subseteq\check{D}.$

Let us introduce the notion of an adapted basis for the given Hodge decomposition or the Hodge filtration.
For any $p\in \mathcal{T}$ and $f^k=\dim F^k_p$ for any $0\leq k\leq n$, we call a basis $$\xi=\left\{ \xi_0, \xi_1, \cdots, \xi_N, \cdots, \xi_{f^{k+1}}, \cdots, \xi_{f^k-1}, \cdots,\xi_{f^{2}}, \cdots, \xi_{f^{1}-1}, \xi_{f^{0}-1} \right\}$$ of $H^n(M_p, \mathbb{C})$ an \textit{adapted basis for the given Hodge decomposition}
$H^n(M_p, {\mathbb{C}})=H^{n, 0}_p\oplus H^{n-1, 1}_p\oplus\cdots \oplus H^{1, n-1}_p\oplus H^{0, n}_p, $
if it satisfies $
H^{k, n-k}_p=\text{Span}_{\mathbb{C}}\left\{\xi_{f^{k+1}}, \cdots, \xi_{f^k-1}\right\}$ with $\dim H^{k,n-k}_p=f^k-f^{k+1}$.
Moreover, unless otherwise pointed out, the matrices in this paper are $m\times m$ matrices, where $m=f^0$. The blocks of the $m\times m$ matrix $T$ is set as follows:
for each $0\leq \alpha, \beta\leq n$, the $(\alpha, \beta)$-th block $T^{\alpha, \beta}$ is
\begin{align}\label{block}
T^{\alpha, \beta}=\left[T_{ij}\right]_{f^{-\alpha+n+1}\leq i \leq f^{-\alpha+n}-1, \ f^{-\beta+n+1}\leq j\leq f^{-\beta+n}-1},
\end{align} where $T_{ij}$ is the entries of
the matrix $T$, and $f^{n+1}$ is defined to be zero. In particular, $T =[T^{\alpha,\beta}]$ is called a \textit{block lower triangular matrix} if
$T^{\alpha,\beta}=0$ whenever $\alpha<\beta$.

Let us fix an adapted basis $\{\eta_0, \cdots, \eta_{m-1}\}$ for the Hodge decomposition of the base point $p\in\mathcal{T}$,
then elements in $N_+$ can be realized as nonsingular block lower triangular matrices whose diagonal blocks are all identity submatrix. By viewing $N_+$ as a subset of $\check{D}$ with the fixed base point, we define
\begin{align*}\check{\mathcal{T}}=\Phi^{-1}(N_+\cap D).
\end{align*}

Let us still denote the restriction map $\Phi|_{_{\check{\mathcal{T}}}}$ by $\Phi$. We first prove the following important proposition by using structure theory for the Lie groups and Lie algebras.
\begin{proposition}\label{bound} The image of the restriction map $\Phi:\,\check{\mathcal{T}}\rightarrow N_+$ is bounded in $N_+\simeq \mathbb{C}^d$ with respect to the Euclidean metric on $N_+$.
\end{proposition}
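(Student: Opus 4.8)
The plan is to bound, uniformly over $p'\in\check{\mathcal T}$, the block entries of the period matrix $A:=\Phi(p')\in N_+\cap D$ written with respect to a fixed adapted basis $\{\eta_0,\dots,\eta_{m-1}\}$ of $H^n(M_p,\mathbb C)$ at the base point $p$. First I would choose this base adapted basis so that the matrix of $Q$ in it has the standard block anti-diagonal form and the Hodge inner product $h_p(u,v)=Q(C_pu,\bar v)$ is the standard one, which makes the Hodge--Riemann relations transparent on the level of period matrices. Since $A\in N_+\subset G_{\mathbb C}$ automatically preserves $Q$, the first Hodge--Riemann relation $Q(F^k_{p'},F^{n-k+1}_{p'})=0$ imposes no constraint on $A$ beyond $A\in G_{\mathbb C}$; because $A$ is block lower triangular with identity diagonal blocks, the task is therefore to bound the blocks $A^{\alpha,\beta}$ with $\alpha>\beta$, and the only usable inputs are the positivity relation $(\sqrt{-1})^{2k-n}Q(v,\bar v)>0$ on $H^{k,n-k}_{p'}=F^k_{p'}\cap\overline{F^{n-k}_{p'}}$ together with the horizontality of $\Phi$.

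The horizontality enters through the structure of $\mathfrak g=\bigoplus_k\mathfrak g^{k,-k}$. Because $A\in N_+$, the Maurer--Cartan form $\theta:=A^{-1}\,dA$ is strictly block lower triangular, while Griffiths transversality $\partial F^k_{p'}\subset F^{k-1}_{p'}$ forces it to have no blocks below the first sub-diagonal; the only blocks surviving both conditions constitute the $\mathfrak g^{-1,1}$-part, so $\theta$ takes values in $\mathfrak g^{-1,1}$. The identity $d\theta+\theta\wedge\theta=0$ holds automatically, and since $\theta$ is holomorphic and $\mathfrak g^{-1,1}$-valued, $d\theta$ (like $\theta$) shifts the Hodge degree by $1$ whereas $\theta\wedge\theta$ shifts it by $2$; as the zero endomorphism is the only one that does both, $d\theta=0$ and $\theta\wedge\theta=0$, i.e.\ the sub-diagonal blocks of $\theta$ are closed and pairwise commuting. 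Writing out $dA=A\theta$ blockwise gives $d(A^{\alpha,\beta})=A^{\alpha,\beta+1}\,\theta^{\beta+1,\beta}$, so the block columns of $A$ are reconstructed recursively from the sub-diagonal blocks of $\theta$ by successive integration; in particular every ``deep'' block $A^{\alpha,\beta}$ with $\alpha\ge\beta+2$ is an iterated integral of the sub-diagonal data alone.

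With this in place, the heart of the argument is to run the positivity relation at \emph{every} level $k$ against the adapted basis given by the columns $v_0,\dots,v_{m-1}$ of $A$. Expanded in this basis, the second Hodge--Riemann relation says that a block Hermitian form assembled explicitly from $A$, $\bar A$ and the polarization --- weighted by the Weil signs $(\sqrt{-1})^{2k-n}$ --- is positive definite; its leading (block-diagonal) part equals $\pm1$ coming from the normalization $A^{\alpha,\alpha}=\mathrm{Id}$ built into $N_+$, so positive-definiteness confines the remaining ``correction'' blocks to a bounded region. Using the horizontal reconstruction of the previous paragraph to express the deeper blocks through the sub-diagonal ones, and collating the positivity inequalities over all $k$, one obtains $\|A^{\alpha,\beta}\|\le c$ for all $\alpha>\beta$, with $c$ depending only on the Hodge numbers $h^{k,n-k}$ and on $Q$. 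Since $c$ is independent of $p'$, this shows $\Phi(\check{\mathcal T})$ is bounded in $N_+\simeq\mathbb C^d$.

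I expect the main obstacle to be precisely this intertwining of positivity with horizontality. The positivity relation alone is not enough: unless the period domain $D$ is Hermitian symmetric, $N_+\cap D$ is itself an unbounded subset of $N_+$, so the boundedness of $\Phi(\check{\mathcal T})$ genuinely requires the extra rigidity supplied by Griffiths transversality (in the guise $\theta\in\mathfrak g^{-1,1}$, $[\theta,\theta]=0$). Pinning down how this rigidity forces the deeper blocks $A^{\alpha,\beta}$ with $\alpha\ge\beta+2$ to stay bounded, and in a way that is uniform over all of $\check{\mathcal T}$ rather than only near the base point, is the delicate step; the supporting facts --- the decomposition $\mathfrak g=\bigoplus_k\mathfrak g^{k,-k}$ with $V=G_{\mathbb R}\cap B$ compact, the block description of $N_+=\exp(\mathfrak n_+)$, and the linear algebra of positive-definite block Hermitian forms --- are comparatively routine.
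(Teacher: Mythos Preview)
Your plan diverges from the paper's route and, as written, has a genuine gap at exactly the place you yourself flag as ``delicate''.

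The paper does not try to bound the block entries of $A=\Phi(p')$ directly from the Hodge--Riemann inequalities. It works Lie-theoretically instead: from the Cartan decomposition $\mathfrak g_{\mathbb R}=\mathfrak k_0\oplus\mathfrak p_0$ one checks that $\mathfrak g_{\mathbb R}$ admits a Cartan subalgebra $\mathfrak h_0\subset\mathfrak k_0$ (so is of the first category in Sugiura's sense); a maximal abelian subspace of $\mathfrak p_0$ is then decomposed through the noncompact root vectors, and an extension of Harish-Chandra's bounded-realization argument is run root by root. Horizontality enters by confining attention to the abelian subalgebra $\mathfrak a=\Phi_*(T^{1,0}_p\mathcal T)\subset\mathfrak g^{-1,1}$: one first shows that the projection $\Psi=P\circ\Phi$ into $\exp(\mathfrak a)\cap D$ lands in a product of discs, and then a separate finiteness statement for $P|_{\Phi(\check{\mathcal T})}$ upgrades this to boundedness of $\Phi(\check{\mathcal T})$ itself in $N_+$.

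Your differential analysis is correct and is morally the same horizontality input the paper uses: $\theta=A^{-1}dA$ is $\mathfrak g^{-1,1}$-valued, and $d\theta=0$, $[\theta,\theta]=0$ follow, which is precisely what makes the values of $\theta$ span an abelian $\mathfrak a$. But the step ``positive-definiteness confines the remaining correction blocks to a bounded region'' is asserted, not proved, and this is where your approach stalls. You already observe that $N_+\cap D$ is unbounded whenever $D$ is not Hermitian symmetric, so positivity cannot by itself bound the $A^{\alpha,\beta}$; and your recursion $d(A^{\alpha,\beta})=A^{\alpha,\beta+1}\,\theta^{\beta+1,\beta}$ only controls the \emph{differential} of a deep block once the shallower ones are controlled --- it gives no uniform bound on the primitive without an a priori bound on the size of $\check{\mathcal T}$ or on the integrals of $\theta$, which is exactly what is to be proved. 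Nothing in the proposal supplies a mechanism that couples positivity and horizontality into an actual inequality $\|A^{\alpha,\beta}\|\le c$; the paper's mechanism is the Harish-Chandra polydisc argument transported along the noncompact roots, and something of comparable strength appears to be needed.
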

The main idea of the proof of this theorem is as follows.  First by taking the Cartan decomposition $\mathfrak{g}_{_{\mathbb{R}}}=\mathfrak{k}_0\oplus\mathfrak{p}_0$, we realize that the simple real Lie algebra $\mathfrak{g}_{\mathbb{R}}$ in our case has a Cartan subalgebra $\mathfrak{h}_0\subseteq\mathfrak{k}_0$. Thus $\mathfrak{g}$ is of the first category (cf. \cite{Sugi}). Then using the result of Lemma 3 in \cite{Sugi1} about the real semisimple Lie algebra of first category that the maximal abelian subspace in $\mathfrak{p}_0$ can be decomposed using the noncompact root vectors. This decomposition along with the property that the period map is horizontal in the sense of \cite{schmid1} allows us to show that $\Phi(\check{\mathcal{T}})$ lies in a product of discs in $N_+$. This argument is a slight extension of Harish-Chandra's proof of his famous embedding theorem of the Hermitian symmetric domains as bounded domains in complex Euclidean spaces. One may refer to Lemma 7 and Lemma 8 at pp.~582--583 in \cite{HC}, Proposition 7.4 at pp.~385 and Ch VIII $\S7$ at pp.~382--396 in \cite{Hel}, Proposition 1 at pp.~91 and Proof of Theorem 1 at pp.~95--97 in \cite{Mok}, and Lemma 2.2.12 at pp.~141-142 and $\S5.4$ in \cite{Xu} for more details. Then we apply the Griffiths transversality, together with the geometric structures of integral variations of Hodge structures, to show that the 
image $\Phi(\check{\mathcal{T}})$ lies in a bounded set in $N_+$.

Moreover, since for a generic base point, its orbit of $N_+$ is the largest Schubert cell in $\check{D}$ with complement a divisor, and the period map $\Phi$ is nondegenerate by local Torelli theorm, it is not hard to show that the subset $\check{\mathcal{T}}$ is an open submanifold in $\mathcal{T}$ with the complex codimension of $\mathcal{T}\backslash\check{\mathcal{T}}$ at least one. Then combining with the above boundedness of $\Phi$ on $\check{\mathcal{T}}$, we can apply the Riemann extension theorem to conclude the following theorem.
\begin{theorem}\label{image}The image of the period map $\Phi:\,\mathcal{T}\rightarrow D$ lies in $N_+\cap D$ as a bounded subset.
\end{theorem}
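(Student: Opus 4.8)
The plan is to obtain Theorem~\ref{image} from Proposition~\ref{bound} by a removable-singularity argument, as already outlined above. Throughout, recall that $\check{\mathcal{T}}=\Phi^{-1}(N_+\cap D)$ and that $\Phi$ is a globally defined holomorphic map $\mathcal{T}\to D$.

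\emph{Step 1: $\check{\mathcal{T}}$ is an open dense submanifold of $\mathcal{T}$ whose complement is a proper analytic subset.} Since both $N_+\simeq\mathbb{C}^d$ and $D$ are open in $\check{D}$, so is $N_+\cap D$, and hence $\check{\mathcal{T}}$ is open by continuity of $\Phi$. With the base point $p$ and the adapted basis $\{\eta_0,\cdots,\eta_{m-1}\}$ fixed as above, the orbit $N_+\cdot B/B$ is the big Schubert cell of $\check{D}\simeq G_{\mathbb{C}}/B$: it is open and dense, and its complement $\mathcal{D}:=\check{D}\setminus N_+$ is a union of Schubert subvarieties of codimension one, in particular an analytic subset of $\check{D}$. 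Because $\Phi(\mathcal{T})\subseteq D$, we have $\mathcal{T}\setminus\check{\mathcal{T}}=\Phi^{-1}(\mathcal{D})$, which is an analytic subset of $\mathcal{T}$; and it is a proper subset, since $p\in\check{\mathcal{T}}$ as $\Phi(p)$ equals the base point $o=eB$ of the orbit $N_+\cdot B/B$, which lies in $N_+\cap D$. As $\mathcal{T}$ is connected and smooth (Proposition~\ref{imp}), a proper analytic subset has complex codimension at least one; using the nondegeneracy of $\Phi$ (the local Torelli theorem) one concludes that $\check{\mathcal{T}}$ is the desired open dense submanifold with $\mathcal{T}\setminus\check{\mathcal{T}}$ of codimension $\ge 1$.

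\emph{Step 2: boundedness and extension.} By Proposition~\ref{bound}, under the identification $N_+\simeq\mathbb{C}^d$ the restriction $\Phi|_{\check{\mathcal{T}}}:\check{\mathcal{T}}\to\mathbb{C}^d$ is a bounded holomorphic map. Since $\mathcal{T}\setminus\check{\mathcal{T}}$ is an analytic subset of codimension $\ge 1$, the Riemann extension theorem applies componentwise and yields a holomorphic map $\psi:\mathcal{T}\to\mathbb{C}^d=N_+\subseteq\check{D}$ extending $\Phi|_{\check{\mathcal{T}}}$; moreover $\psi$ satisfies the same bound, since $\check{\mathcal{T}}$ is dense and $\psi$ is continuous. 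Now $\psi$ and $\Phi:\mathcal{T}\to D\subseteq\check{D}$ are two holomorphic maps into $\check{D}$ that agree on the nonempty open set $\check{\mathcal{T}}$, so by the identity theorem $\Phi=\psi$ on all of the connected manifold $\mathcal{T}$. Therefore $\Phi(\mathcal{T})=\psi(\mathcal{T})\subseteq N_+$, it is bounded in $N_+\simeq\mathbb{C}^d$, and it is contained in $D$; hence $\Phi(\mathcal{T})\subseteq N_+\cap D$ is bounded, as claimed.

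The one place requiring care is Step~1: one must ensure the base point can be chosen so that its $N_+$-orbit is the big cell, and that $\Phi$ does not send all of $\mathcal{T}$ into the Schubert divisor $\mathcal{D}$ — the latter being exactly what the relation $\Phi(p)=o\notin\mathcal{D}$ guarantees. The genuinely hard analytic content, the boundedness of $\Phi|_{\check{\mathcal{T}}}$, is already supplied by Proposition~\ref{bound} through the Cartan decomposition and Harish-Chandra-type estimates; granting that, Step~2 is a routine application of the classical removable-singularity theorem together with the identity theorem.
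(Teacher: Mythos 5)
Your argument is correct and follows essentially the same route as the paper: the paper likewise establishes that $\check{\mathcal{T}}=\Phi^{-1}(N_+\cap D)$ is open with analytic complement of codimension at least one (via the big Schubert cell and local Torelli), invokes Proposition~\ref{bound} for boundedness, and applies the Riemann extension theorem. Your explicit use of the identity theorem to identify the extension with $\Phi$ itself is a detail the paper leaves implicit, but it is the same proof.
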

With the above theorem, we have ${\mathcal{T}}=\Phi^{-1}(N_+\cap D).$
Thus for any $q\in {\mathcal{T}}$, $\Phi(q)\in N_+$ is a nonsingular block lower triangular matrix with identity diagonal blocks. 

We consider an Euclidean subspace $A\subset N_{+}$ by $\mathfrak{a}=\Phi_*(\text{T}_p^{1,0}\T)\subset \mathfrak{n}_{+}$ the Abelian subalgebra, and $A=\exp(\mathfrak{a})$ the corresponding Lie group,
and the holomorphic map,
$$\Psi :\, \check{\T} \to A\cap D,$$
where $\Psi=P \circ \Phi|_{\check{\T}}$ and $P$ is the projection map from $N_{+}\cap D$ to $A\cap D$.

Now by Theorem \ref{image}, we can extend the holomorphic map $\Psi :\, \check{\T} \to A\cap D$ over $\T$ as
$$\Psi :\, {\T} \to A\cap D,$$
such that $\Psi=P \circ \Phi$.

Let us review the definition of complex affine manifolds.
Let $M$ be a complex manifold of complex dimension $n$. If there
is a coordinate cover $\{(U_i,\,\phi_i);\, i\in I\}$ of M such
that $\phi_{ik}=\phi_i\circ\phi_k^{-1}$ is a holomorphic affine
transformation on $\mathbb{C}^n$ whenever $U_i\cap U_k$ is not
empty, then $\{(U_i,\,\phi_i);\, i\in I\}$ is called a complex
affine coordinate cover on $M$ and it defines a holomorphic affine structure on $M$. One may refer to \cite[pp~ 215]{Mats} for more details.

By using the local Torelli theorem for Calabi--Yau manifolds (cf. \cite{tian1} and \cite{tod1}) and the definition of holomorphic affine structure, we arrive at the main theorem of this section.
\begin{theorem}The holomorphic map $\Psi :\, {\T} \to A\cap D$ defines a local coordinate around each point $q\in{\mathcal{T}}$. Thus the map $\Psi$ itself gives a global holomorphic coordinate for ${\mathcal{T}}$ with the transition maps all identity maps. In particular, the global holomorphic coordinate $\Psi :\, {\T} \to A\cap D \subset A\simeq \C^{N}$ defines a holomorphic affine structure on ${\mathcal{T}}$. Therefore, ${\mathcal{T}}$ is a complex affine manifold.
\end{theorem}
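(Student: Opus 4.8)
The plan is to reduce the theorem to a single assertion: that $\Psi:\T\to A\cap D$ is a local biholomorphism at every point $q\in\T$. Granting this, the rest is formal. One picks an open cover $\{U_i\}$ of $\T$ such that each $\Psi|_{U_i}$ is a biholomorphism onto an open subset of $A\simeq\C^{N}$; then $\{(U_i,\Psi|_{U_i})\}$ is a holomorphic coordinate atlas, and on every nonempty overlap $U_i\cap U_j$ the transition map $\phi_{ij}=(\Psi|_{U_i})\circ(\Psi|_{U_j})^{-1}$ is literally the identity of $\C^{N}$, in particular a holomorphic affine transformation. So $\{(U_i,\Psi|_{U_i})\}$ is a complex affine coordinate cover in the sense recalled just before the theorem; $\Psi$ serves simultaneously as a global holomorphic coordinate whose transition maps are all the identity, it defines a holomorphic affine structure on $\T$, and $\T$ is a complex affine manifold.

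It thus remains to show $d\Psi_q$ is an isomorphism for each $q$. First, $\dim_{\C}A=\dim_{\C}\mathfrak{a}=\dim_{\C}\T$: the first equality holds by definition of $A=\exp(\mathfrak{a})$, and the second because $\mathfrak{a}=\Phi_*(T^{1,0}_p\T)$ and the local Torelli theorem for Calabi--Yau manifolds makes $\Phi$ an immersion. Hence it suffices to prove $d\Psi_q$ is injective. Since $\Psi=P\circ\Phi$ we have $d\Psi_q=dP\circ d\Phi_q$ with $d\Phi_q$ already injective by local Torelli, so the whole issue is that the projection $P$ must not annihilate any nonzero tangent direction of $\Phi_*(T^{1,0}_q\T)$.

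At the base point this is immediate, since $\Phi_*(T^{1,0}_p\T)=\mathfrak{a}$ by construction and $P$ restricts to the identity on $A$. For general $q$ I would use the rigidity of Calabi--Yau variations of Hodge structure. Writing $\Phi(q)\in N_+$ and using the grading $\mathfrak{n}_+=\bigoplus_{k\ge 1}\mathfrak{g}^{-k,k}$, Griffiths transversality (property (3) of the period map) forces the components of $\Phi(q)$ in $\mathfrak{g}^{-k,k}$ for $k\ge 2$ to be universal polynomial expressions in its $\mathfrak{g}^{-1,1}$-component; and since $M$ is Calabi--Yau, $h^{n,0}=1$, so the whole flag $\Phi(q)$ is already reconstructed from the single line $F^{n}_q$ together with its successive derivatives along $\T$. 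The first-order part of this data lies in the $\mathfrak{a}$-summand of the $\mathfrak{g}^{-1,1}$-block of $\Phi(q)$, which is exactly the component retained by $P$; one checks that $P$ therefore does not drop the rank, so $d\Phi_q$ and $d\Psi_q$ have the same rank and $d\Psi_q$ is injective. This is the same mechanism used to control $\Psi$ on $\check{\T}$ in the proof of Proposition \ref{bound}, where the boundedness of $\Psi$ and the finiteness of $P|_{\Phi(\check{\T})}$ were obtained precisely because the moving flag is rigidly tied to its $(n,0)$-line.

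The genuine obstacle is this last injectivity away from the base point: the abelian subalgebra $\mathfrak{a}$ was selected using only the tangent data at $p$, so one really has to exploit Calabi--Yau rigidity — Griffiths transversality together with $h^{n,0}=1$, equivalently the non-degeneracy of the relevant cup-product pairings — to conclude that the fixed subspace $\mathfrak{a}\subset\mathfrak{g}^{-1,1}$ still detects the full tangent space $T^{1,0}_q\T$ at every $q$. The dimension count, the behaviour at $p$, and the passage from ``local biholomorphism everywhere'' to the affine atlas are routine.
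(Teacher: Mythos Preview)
Your overall strategy---reduce to showing $\Psi$ is a local biholomorphism at every $q$, then read off the affine atlas with identity transition maps---is exactly how the paper proceeds: it simply writes that the theorem follows ``by the local Torelli theorem for Calabi--Yau manifolds and the definition of holomorphic affine structure'', deferring details to \cite{CGL}. You go further than the paper's text by isolating the only non-formal step, namely why the projection $P$ does not annihilate any tangent direction of $\Phi_*(T^{1,0}_q\T)$ at points $q\neq p$.

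Your justification of that step, however, is not right as written. The claim that Griffiths transversality ``forces the components of $\Phi(q)$ in $\mathfrak{g}^{-k,k}$ for $k\ge 2$ to be universal polynomial expressions in its $\mathfrak{g}^{-1,1}$-component'' confuses a differential constraint on $\Phi^{-1}d\Phi$ with an algebraic one on the values of $\Phi$; horizontality alone does not determine the higher graded pieces this way, and the subsequent ``one checks'' hides the actual work. The clean argument is the following. Identify $\Psi(q)$ (up to the isomorphism $A\simeq\C^N$) with the $H^{n-1,1}_p$-component of the normalized generator $\zeta_0(q)$ of $F^n_q$ (the first column of $\Phi(q)\in N_+$, with $\eta_0$-coefficient equal to $1$). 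If $v\in T^{1,0}_q\T$ satisfies $d\Psi_q(v)=0$, then $d_v\zeta_0\in\bigoplus_{k\le n-2}H^{k,n-k}_p$; by Griffiths transversality $d_v\zeta_0\in F^{n-1}_q$. But $\Phi(q)\in N_+$ from Theorem~\ref{image} says precisely that $F^{n-1}_q\cap\bigoplus_{k\le n-2}H^{k,n-k}_p=0$, so $d_v\zeta_0=0$, and the first-piece local Torelli theorem for Calabi--Yau manifolds gives $v=0$. The missing ingredient is therefore not extra ``rigidity'' of the variation, but the transversality of $F^{n-1}_q$ to the fixed complement of $F^{n-1}_p$ already encoded in Theorem~\ref{image}.
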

We remark that the above construction of the holomorphic affine structure can be adopted to polarized and marked Calabi--Yau type manifolds. One may refer to \cite{CGL2} for more details. 

\section{Hodge metric completion space of Torelli space and a global Torelli theorem}

This section contains a review of our extension of the affine structure to the Hodge metric completion $\mathcal{T}^H$ of the Torelli space $\mathcal{T}'$, as well as its consequences including a global Torelli theorem for polarized and marked Calabi--Yau manifolds.

Let us denote the period map on the smooth moduli space by $\Phi_{\mathcal{Z}_m}:\, \mathcal{Z}_{m}\rightarrow D/\Gamma$, where
$\Gamma$ denotes the global monodromy group which acts
properly and discontinuously on $D$. Then $\Phi: \,\mathcal{T}\to D$ is the lifting of $\Phi_{\mathcal{Z}_m}\circ \pi_m$, where $\pi_m: \,\mathcal{T}\rightarrow \mathcal{Z}_m$ is the universal covering map. There is the Hodge metric $h$ on $D$, which is a complete homogeneous metric and is studied in \cite{GS}. For Calabi--Yau manifolds, both $\Phi_{\mathcal{Z}_m}$ and $\Phi$ are locally injective. Thus the pull-backs of $h$ on $\mathcal{Z}_m$ and $\mathcal{T}$ are both well-defined K\"ahler metrics, similarly one has the pull-back metric on the Toreli space $\mathcal{T}'$.  These metrics are still called the Hodge metrics. 

 By the work of Viehweg in \cite{v1}, we know that $\mathcal{Z}_m$ is quasi-projective and consequently we can find a smooth projective compactification $\bar{\mathcal{Z}}_{m}$ such that $\mathcal{Z}_m$ is Zariski open in $\bar{\mathcal{Z}}_{m}$ and the complement $\bar{\mathcal{Z}}_{m}\backslash\mathcal{Z}_m$ is a divisor of normal crossings. Therefore, $\mathcal{Z}_m$ is dense and open in $\bar{\mathcal{Z}}_{m}$ with the complex codimension of the complement $\bar{\mathcal{Z}}_{m}\backslash \mathcal{Z}_m$ at least one. Moreover as $\bar{\mathcal{Z}}_{m}$ is a compact space, it is a complete space.

Let us now take $\mathcal{Z}^H_{m}$ to be the completion of $\mathcal{Z}_m$ with respect to the Hodge metric. Then $\mathcal{Z}_{m}^H$ is the smallest complete space with respect to the Hodge metric that contains $\mathcal{Z}_m$. Although the compact space $\bar{\mathcal{Z}}_{m}$ may not be unique, the Hodge metric completion space $\mathcal{Z}^H_{m}$ is unique up to isometry. In particular, $\mathcal{Z}^H_{m}\subseteq\bar{\mathcal{Z}}_{m}$ and thus the complex codimension of the complement $\mathcal{Z}^H_{m}\backslash \mathcal{Z}_m$ is at least one. By Lemma 2.7 in \cite{CGL}, we conclude that the mectic completion $\mathcal{Z}^H_{m}$ is a dense and open smooth submanifold in $\bar{\mathcal{Z}}_m$ with $\text{codim}_{\mathbb{C}}(\mathcal{Z}^H_{m}\backslash \mathcal{Z}_m)\geq 1$, and $\mathcal{Z}^H_{m}\backslash \mathcal{Z}_m$ consists of those points in $\bar{\mathcal{Z}}_{m}$ around which the so-called Picard-Lefschetz transformations are trivial.
Moreover the extended period map $\Phi^H_{_{\mathcal{Z}_m}}:\, \mathcal{Z}^H_{m}\to D/\Gamma$ is proper and holomorphic.
Notice that the proof of this conclusion uses extension of the period map as wells as the basic definitions of metric completion. Thus $\mathcal{T}^H_{m}$ is a connected and simply connected complete smooth complex manifold.
We also obtain the following commutative diagram:
\begin{align*}
\xymatrix{\mathcal{T}\ar[r]^{i_{m}}\ar[d]^{\pi_m}&\mathcal{T}^H_{m}\ar[d]^{\pi_{m}^H}\ar[r]^{{\Phi}^{H}_{m}}&D\ar[d]^{\pi_D}\\
\mathcal{Z}_m\ar[r]^{i}&\mathcal{Z}^H_{m}\ar[r]^{{\Phi}_{_{\mathcal{Z}_m}}^H}&D/\Gamma,
}
\end{align*}
where $i$ is the inclusion map, $i_m$ is a lifting of $i\circ \pi_{m}$, and $\Phi^H_{m}$ is a lifting of $\Phi^H_{_{\mathcal{Z}_m}}\circ \pi_{m}^H$, and we fix a suitable choice of $i_m$ and $\Phi^H_{m}$ such that $\Phi=\Phi^H_{m}\circ i_m$. Let us denote $\mathcal{T}_m:=i_{m}(\mathcal{T})$ and the restriction map $\Phi_m=\Phi^H_{m}|_{\mathcal{T}_m}$, then we also have $\Phi=\Phi_m\circ i_m$. Moreover, it is not hard to show that $\Phi_m$ is also bounded by Theorem \ref{bound}. With these notations, we prove that the image $\mathcal{T}_m$ equals to the preimage $(\pi^H_{m})^{-1}(\mathcal{Z}_{m})$. Therefore, $\mathcal{T}_m$ is a connected open complex submanifold in $\mathcal{T}^H_{m}$ and $\text{codim}_{\mathbb{C}}(\mathcal{T}^H_{m}\backslash\mathcal{T}_m)\geq 1$. It is easy to see that $\mathcal{Z}_{m}^{H}\setminus \mathcal{Z}_{m}$ is an analytic subvariety of $\mathcal{Z}_{m}^{H}$, and hence the set $\mathcal{T}^H_{m}\backslash\mathcal{T}_m$ is also an analytic subvariety of $\mathcal{T}^H_{m}$.

Recall that in $\S$\ref{affine structure}, we have fixed a base point $p\in\mathcal{T}$ and an adapted basis $\{\eta_0, \cdots, \eta_{m-1}\}$ for the Hodge decomposition of the base point $\Phi(p)\in D$. With the fixed base point in $D$, we can identify $N_+$ with its unipotent orbit in $\check{D}$.  Then applying the Riemann extension theorem to the bounded map $\Phi_m: \mathcal{T}_m\rightarrow N_+\cap D$, we obtain the following lemma.
\begin{lemma}\label{Riemannextension}
The map $\Phi^{H}_{m}$ is a bounded holomorphic map from $\mathcal{T}^H_{m}$ to $N_+\cap D$.
\end{lemma}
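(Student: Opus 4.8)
The plan is to promote, via a removable-singularity argument, the two properties of $\Phi_m=\Phi^H_m|_{\mathcal{T}_m}$ that are already in hand — namely that $\Phi_m$ takes values in $N_+\cap D$ and is bounded there (this follows from Theorem \ref{image}, which gives $\Phi(\mathcal{T})\subseteq N_+\cap D$, together with $\Phi=\Phi_m\circ i_m$ and the boundedness of $\Phi_m$ recorded above) — to the whole of $\mathcal{T}^H_m$, using only that $\Phi^H_m\colon\mathcal{T}^H_m\to D$ is holomorphic and that $\mathcal{T}^H_m\setminus\mathcal{T}_m$ is an analytic subvariety of $\mathcal{T}^H_m$ of complex codimension at least one.

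First I would recall that, with the fixed base point $\Phi(p)\in D$, the assignment $c\mapsto cB$ is a holomorphic open embedding identifying $N_+\cong\mathbb{C}^d$ with an open subset $N_+B/B$ of $\check{D}$, and I would denote by $(w_1,\dots,w_d)$ the resulting holomorphic coordinate functions on this chart. Fix a point $q\in\mathcal{T}^H_m\setminus\mathcal{T}_m$ and a connected coordinate polydisc $U\subseteq\mathcal{T}^H_m$ around $q$; set $A=U\cap(\mathcal{T}^H_m\setminus\mathcal{T}_m)$, an analytic subset of $U$ of codimension $\geq 1$, so that $U\setminus A=U\cap\mathcal{T}_m$ is open and dense in $U$. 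On $U\setminus A$ the map $\Phi^H_m$ agrees with $\Phi_m$ and hence lands in $N_+$, so each $g_i:=w_i\circ\Phi^H_m$ is a holomorphic function on $U\setminus A$, bounded because $\Phi_m(\mathcal{T}_m)$ is bounded in $N_+\cong\mathbb{C}^d$. By the Riemann extension theorem each $g_i$ extends to a holomorphic function $\widetilde{g}_i$ on $U$, giving a holomorphic map $\widetilde{\Phi}=(\widetilde{g}_1,\dots,\widetilde{g}_d)\colon U\to\mathbb{C}^d=N_+\hookrightarrow\check{D}$ which coincides with $\Phi^H_m$ on the dense open set $U\setminus A$.

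Since $\widetilde{\Phi}$ and $\Phi^H_m$ are holomorphic maps of the connected manifold $U$ into $\check{D}$ agreeing on a nonempty open subset, the identity theorem gives $\widetilde{\Phi}=\Phi^H_m$ on all of $U$; in particular $\Phi^H_m(U)\subseteq N_+$, so $\Phi^H_m(q)\in N_+\cap D$. As $q$ was arbitrary, $\Phi^H_m$ maps $\mathcal{T}^H_m$ into $N_+\cap D$. For boundedness, continuity of $\widetilde{\Phi}$ and density of $U\setminus A$ in $U$ give $\Phi^H_m(U)=\widetilde{\Phi}(U)\subseteq\overline{\Phi_m(U\cap\mathcal{T}_m)}\subseteq\overline{\Phi_m(\mathcal{T}_m)}$; since $\Phi^H_m(\mathcal{T}_m)=\Phi_m(\mathcal{T}_m)$ as well, we conclude $\Phi^H_m(\mathcal{T}^H_m)\subseteq\overline{\Phi_m(\mathcal{T}_m)}$, which is a bounded subset of $N_+\cong\mathbb{C}^d$.

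I expect the only delicate point to be that $\Phi^H_m$ a priori takes values only in $D$, not in the chart $N_+$, so the coordinate functions $w_i\circ\Phi^H_m$ are not defined on all of $\mathcal{T}^H_m$ at the outset; one must first restrict to $\mathcal{T}_m$, where Theorem \ref{image} guarantees the image lies in $N_+$, carry out the coordinatewise extension there, and only afterwards invoke the identity theorem to see that the extended map still lands in $N_+$. Beyond this, the hypotheses — codimension $\geq 1$ of $\mathcal{T}^H_m\setminus\mathcal{T}_m$ and boundedness of $\Phi_m$ — are exactly what the Riemann extension theorem requires, so no further input is needed.
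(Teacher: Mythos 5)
Your proposal is correct and is essentially the paper's own argument: the paper obtains the lemma by "applying the Riemann extension theorem to the bounded map $\Phi_m:\mathcal{T}_m\to N_+\cap D$," using exactly the facts you invoke (boundedness of $\Phi_m$ from Theorem \ref{image}, and that $\mathcal{T}^H_m\setminus\mathcal{T}_m$ is an analytic subvariety of codimension at least one). You merely spell out the coordinatewise extension in the chart $N_+\cong\mathbb{C}^d$ and the identity-theorem step reconciling the extension with the already holomorphic $\Phi^H_m$, which is a faithful elaboration of the same proof.
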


Now we consider the composite,
$$\Psi^{H}_{m} :\, \T^{H}_{m} \to A\cap D,$$
such that $\Psi^{H}_{m}=P\circ \Phi^{H}_{m}$, where $P$ is the projection map from $N_{+}\cap D$ to its subspace $A\cap D$.
Moreover, we also have $\Psi=P\circ\Phi=P\circ \Phi^H_{m}\circ i_m=\Psi^H_{m}\circ i_m$. Then by using Hodge bundles and the property that the holomorphic map $\Psi$ defines the holomorphic affine structure on $\mathcal{T}$, we prove the following theorem.

\begin{theorem}\label{THmaffine}The holomorphic map $\Psi^H_{m}: \,\mathcal{T}^H_{m}\rightarrow A\cap D$ is a local embedding, therefore it defines a global holomorphic affine structure on $\mathcal{T}^H_{m}$. \end{theorem}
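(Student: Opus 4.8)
The plan is to show $\Psi^H_m$ is a local embedding at every point of $\mathcal{T}^H_m$, since once this is known, the induced coordinate charts automatically have identity transition maps (because $\Psi^H_m$ is a single globally defined map into the affine space $A \cap D \subset A \cong \mathbb{C}^N$), exactly as in the proof of the affine structure on $\mathcal{T}$ itself. So the entire content is the local embedding statement. There are two regimes: points lying in the open dense submanifold $\mathcal{T}_m = i_m(\mathcal{T})$, and points lying in the analytic subvariety $\mathcal{T}^H_m \setminus \mathcal{T}_m$ of codimension at least one.

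On $\mathcal{T}_m$ the result is essentially already in hand: $\mathcal{T}_m$ is biholomorphic to $\mathcal{T}$ via $i_m$, and $\Psi = P \circ \Phi = \Psi^H_m \circ i_m$, so $\Psi^H_m|_{\mathcal{T}_m}$ is identified with $\Psi \colon \mathcal{T} \to A \cap D$, which is a local biholomorphism onto its image by the Theorem in Section \ref{affine structure} (ultimately from the local Torelli theorem). Hence $d\Psi^H_m$ is injective at every point of $\mathcal{T}_m$. The work is to extend injectivity of the differential across the thin set $\mathcal{T}^H_m \setminus \mathcal{T}_m$. First I would recall from Lemma \ref{Riemannextension} and the construction that $\Psi^H_m = P \circ \Phi^H_m$ is a bounded holomorphic map on the smooth connected manifold $\mathcal{T}^H_m$, hence honestly holomorphic across the subvariety. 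The natural strategy is to consider the Jacobian: the locus where $d\Psi^H_m$ fails to be injective is an analytic subset $Z \subseteq \mathcal{T}^H_m$, and we have shown $Z \cap \mathcal{T}_m = \emptyset$, so $Z \subseteq \mathcal{T}^H_m \setminus \mathcal{T}_m$, which has codimension $\geq 1$. To rule out $Z \neq \emptyset$ one uses the geometry of the extended period map: $\Phi^H_m$ is, by the results recalled in Section 3, still horizontal and still satisfies Griffiths transversality and local Torelli-type nondegeneracy because $\mathcal{T}^H_m$ carries the extended Hodge bundles $F^k$ and the extended period relations hold by continuity on all of $\mathcal{T}^H_m$. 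Concretely, at any $q \in \mathcal{T}^H_m \setminus \mathcal{T}_m$ the Kodaira–Spencer-type map is still injective — this is the heart of the matter and is exactly what the authors establish when proving Theorem \ref{THmaffine} via "Hodge bundles and the property that the holomorphic map $\Psi$ defines the holomorphic affine structure on $\mathcal{T}$."

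More precisely, I would argue as follows. Fix $q \in \mathcal{T}^H_m$ and work in the affine coordinate $A \cap D$. Because $\Phi^H_m(\mathcal{T}^H_m) \subseteq N_+ \cap D$ and $N_+$ is a unipotent group identified with $\mathbb{C}^d$, the composite $\Psi^H_m$ records, in block form, the "first block row" of the unipotent matrix $\Phi^H_m(q)$, i.e. the entries governing $F^{n}_q$ inside the adapted basis, together with the horizontal directions collected in $\mathfrak{a} = \Phi_*(T^{1,0}_p\mathcal{T})$. By Griffiths transversality and the horizontality of $\Phi^H_m$ (valid on all of $\mathcal{T}^H_m$, not merely on $\mathcal{T}_m$, by continuity of the Hodge bundles and the transversality relations), the derivative $d\Phi^H_m$ at $q$ lands in the horizontal subspace, and the projection $P$ onto $A \cap D$ loses no information on that subspace — this is the mechanism that made $\Psi$ a local coordinate on $\mathcal{T}$. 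Then I would invoke the extended local Torelli theorem: the natural map from $T^{1,0}_q \mathcal{T}^H_m$ to $\mathrm{Hom}(F^n_q, H^n(M,\mathbb{C})/F^n_q)$ remains injective at $q$, because the Hodge bundle $F^n$ over $\mathcal{T}^H_m$ is a line bundle whose local holomorphic sections near $q$ are still nowhere-degenerate (the nondegeneracy is an open condition satisfied on the dense set $\mathcal{T}_m$ and propagates to $\mathcal{T}^H_m$ since $\Phi^H_m$ is an immersion-candidate that is proper onto its image in $D$). Combining horizontality with this injectivity shows $d\Psi^H_m$ is injective at $q$, completing the proof that $\Psi^H_m$ is a local embedding; the transition-map statement and the affine structure then follow formally from the definition of complex affine manifold recalled before Theorem \ref{THmaffine}.

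The main obstacle is precisely the extension of local-Torelli nondegeneracy across $\mathcal{T}^H_m \setminus \mathcal{T}_m$: on $\mathcal{T}_m$ the nondegeneracy of the period map comes from an actual Calabi–Yau family with the Tian–Todorov unobstructedness, but the boundary points of $\mathcal{T}^H_m$ need not a priori carry such a family, so one cannot directly quote \cite{tian1,tod1}. The correct substitute, and the step I would spend the most care on, is the argument that the extended period map $\Phi^H_m \colon \mathcal{T}^H_m \to N_+ \cap D$ is itself a local biholomorphism (equivalently an immersion) everywhere — which the authors presumably secure from the structure of the proof of Theorem \ref{image} together with the properness and holomorphy of the extended period map $\Phi^H_{\mathcal{Z}_m}$ recalled in Section 3, using that immersivity is preserved when a holomorphic map extends holomorphically across an analytic subvariety and remains horizontal. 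Granting that, the reduction of $\Psi^H_m$ to $\Phi^H_m$ via the projection $P$ on the horizontal subspace $\mathfrak{a}$ is exactly as in Section \ref{affine structure}, and the theorem follows.
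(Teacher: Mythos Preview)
Your overall strategy matches the paper's sketch: reduce to showing $d\Psi^H_m$ is injective at every point, verify this on the dense open $\mathcal{T}_m$ via the known affine structure on $\mathcal{T}$, and then extend across the codimension-$\geq 1$ set $\mathcal{T}^H_m \setminus \mathcal{T}_m$. You also correctly isolate this extension as the essential difficulty.

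The gap is that you do not actually close this step; the two justifications you propose both fail. The assertion that ``immersivity is preserved when a holomorphic map extends holomorphically across an analytic subvariety and remains horizontal'' is false in general (e.g.\ $z\mapsto z^2$ on $\mathbb{C}$ is an immersion off $0$ but not at $0$; horizontality places no lower bound on the rank of the differential). Your other line --- that the Kodaira--Spencer-type map at a boundary point $q$ is injective because ``$\Phi^H_m$ is an immersion-candidate that is proper onto its image'' --- is circular: immersivity at $q$ is exactly the claim in question, and properness by itself does not force an injective differential. (A small side correction: $i_m$ is only a local biholomorphism, not a global one --- by Proposition~\ref{i_T} its image is identified with the Torelli space $\mathcal{T}'$, a nontrivial quotient of $\mathcal{T}$ --- though this does not damage your argument on $\mathcal{T}_m$.)

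The paper itself does not spell out the argument here, deferring to \cite{CGL} with the hint ``by using Hodge bundles and the property that $\Psi$ defines the holomorphic affine structure on $\mathcal{T}$.'' The point is that a genuine structural input is needed at boundary points --- for instance, that the extended Hodge metric on $\mathcal{T}^H_m$ coincides with $(\Phi^H_m)^*h$ (so that the pullback is nondegenerate, forcing $d\Phi^H_m$ injective), or a direct analysis of the extended Hodge bundles showing the induced map $T_q\mathcal{T}^H_m \to \mathrm{Hom}(F^n_q, F^{n-1}_q/F^n_q)$ remains an isomorphism --- rather than the soft ``open condition on a dense set'' reasoning you invoke. Your proposal names the obstacle accurately but does not supply this input.
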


Now by the completeness of $\mathcal{T}^H_{m}$ with Hodge metric,  $\Psi^H_{m}$ is an isometry with Hodge metric on $\mathcal{T}^H_{m}$, and a result of Griffiths and Wolf \cite{GW}, we can conclude that the holomorphic map $\Psi^H_{m}: \,\mathcal{T}^H_{m}\rightarrow A\cap D$ is a covering map.
Using the completeness of $\mathcal{T}^H_{m}$ and the holomorphic affine structure on it, we can show that any two points in $\mathcal{T}^H_{m}$ can be joined by a straight line segment. Then the injectivity of $\Psi^H_{m}$ follows by contradiction. In fact,  if $\Psi^H_{m}(p)=\Psi^H_{m}(q)$ for some $p\neq q$, then the fact that $\Psi^H_{m}$ is an affine map would imply that $\Psi^H_{m}$ is a constant map on the line connecting $p$ and $q$. However, this contradicts to the local injectivity of $\Psi^H_{m}$. Moreover, as $\Psi^H_{m}=P\circ \Phi^{H}_{m}$, where $P$ is the projection map and $\Phi^{H}_{m}$ is a bounded map, we may conclude that $\Psi^H_{m}$ is bounded as well. To conclude, we have the following theorem, and one may refer to Section 4.3 in \cite{CGL} for its detailed proof. 
\begin{theorem}\label{injectivityofPhiH}For any $m\geq 3$, the holomorphic map $\Psi^H_{m}:\, \mathcal{T}^H_{m}\to A\cap D$ is an injection and hence bi-holomorphic. In particular, the completion space $\mathcal{T}^H_{m}$ is a bounded domain $A\cap D$ in $A\simeq \mathbb{C}^N$. Moreover, the holomorphic map $\Phi^H_{m}: \,\mathcal{T}^H_{m}\rightarrow N_+\cap D$ is also an injection.
\end{theorem}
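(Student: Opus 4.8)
The plan is to promote the local embedding of Theorem~\ref{THmaffine} to a covering map, and then to use the global affine structure to force that covering to be injective; the remaining assertions will follow formally.

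\textbf{Step 1 (covering map).} First I would note that $\Psi^H_m$ is a local biholomorphism: it is a local embedding by Theorem~\ref{THmaffine} and $\dim_{\mathbb{C}}\mathcal{T}^H_m=N=\dim_{\mathbb{C}}A$; since $\Psi^H_m=P\circ\Phi^H_m$, the map $\Phi^H_m$ is then a local embedding as well. Next I would record that the Hodge metric on $\mathcal{T}^H_m$ is (the completion of) the pull-back under $\Phi^H_m$ of the homogeneous Hodge metric on $D$, and that $\mathcal{T}^H_m$, being a complete smooth Riemannian manifold, is geodesically complete by Hopf--Rinow. Then I would check that $\Psi^H_m$ is in fact a local isometry onto its image: the image of $d\Phi^H_m$ at each point lies along the ``$\mathfrak{a}$-directions'' of $\mathfrak{n}_+$ (this is precisely what allows $\Psi^H_m=P\circ\Phi^H_m$, and not merely $\Phi^H_m$, to define the affine structure of Theorem~\ref{THmaffine}), and $P$ restricted to these directions is the identity, hence an isometry. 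With this, the theorem of Griffiths--Wolf \cite{GW} on complete maps applies and shows that $\Psi^H_m\colon \mathcal{T}^H_m\to A\cap D$ is a covering map onto $A\cap D$ (the image is open since $\Psi^H_m$ is a local homeomorphism, closed by completeness, hence all of the connected set $A\cap D$).

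\textbf{Step 2 (injectivity).} By Theorem~\ref{THmaffine}, $\mathcal{T}^H_m$ carries a holomorphic affine structure in which $\Psi^H_m$ is an affine map: in affine charts it is the restriction of a constant affine transformation of $\mathbb{C}^N$. The key geometric input I would then establish, using completeness of the Hodge metric, is that any two points $p\neq q$ of $\mathcal{T}^H_m$ are joined by an affine geodesic --- a ``straight line segment'' of the affine structure. Granting this, suppose $\Psi^H_m(p)=\Psi^H_m(q)$. An affine map carries the parametrized segment from $p$ to $q$ to the parametrized segment in $\mathbb{C}^N$ from $\Psi^H_m(p)$ to $\Psi^H_m(q)$; as the endpoints agree, the image is a single point, so $\Psi^H_m$ is constant along a non-degenerate segment in $\mathcal{T}^H_m$, contradicting that it is a local embedding. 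Hence $\Psi^H_m$ is injective, so it is a bijective local biholomorphism onto $A\cap D$, i.e.\ a biholomorphism, and $\mathcal{T}^H_m\simeq A\cap D$. Since $A\cap D\subset A\simeq\mathbb{C}^N$ and $\Phi^H_m$ is bounded by Lemma~\ref{Riemannextension}, the map $\Psi^H_m=P\circ\Phi^H_m$ is bounded, so $\mathcal{T}^H_m$ is realized as a bounded domain; and $\Phi^H_m$ is injective because its composition with $P$ is.

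\textbf{Expected main obstacle.} The hard part will be the claim in Step~2 that completeness of the Hodge metric delivers an affine straight segment between any two points. The subtlety is that the affine connection is flat and torsion-free but is \emph{not} the Levi-Civita connection of the Hodge metric, so Hopf--Rinow does not give this directly; instead one must convert geodesic completeness of the Hodge metric, together with the facts (from Step~1) that $\Psi^H_m$ is a covering of the connected domain $A\cap D$ and that $\mathcal{T}^H_m$ is simply connected, into the statement that affine geodesics can be continued and assembled into a segment joining $p$ to $q$ --- equivalently, that the covering $\Psi^H_m$ is trivial, so that $A\cap D$ is simply connected. A secondary technical point is the verification in Step~1 that $\Psi^H_m$ (not merely $\Phi^H_m$) is a local isometry, which rests on matching the horizontal tangent directions of the period map along $\mathcal{T}^H_m$ with the abelian subalgebra $\mathfrak{a}$ used to construct $A$.
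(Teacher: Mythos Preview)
Your proposal is correct and follows essentially the same route as the paper: Griffiths--Wolf applied to the local isometry $\Psi^H_m$ from the Hodge-metric-complete $\mathcal{T}^H_m$ gives a covering of $A\cap D$, and then the affine straight-line argument (any two points of $\mathcal{T}^H_m$ are joined by a segment, so $\Psi^H_m(p)=\Psi^H_m(q)$ would force $\Psi^H_m$ to be constant on that segment) gives injectivity, with boundedness and the injectivity of $\Phi^H_m$ following formally. You have also correctly isolated the two genuine technical points the paper defers to \cite{CGL}: that $\Psi^H_m$, not just $\Phi^H_m$, is a local isometry for the Hodge metric, and that Hodge-metric completeness suffices to produce affine segments even though the flat affine connection is not the Levi--Civita connection.
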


The above corollary implies directly that the definition of $\mathcal{T}^H_{m}$ is independent of the choice of the level $m$ structure.
\begin{proposition}\label{indepofm} For any $m, m'\geq 3$,
the complete complex manifolds $\mathcal{T}^H_m$ and $\mathcal{T}^H_{m'}$ are biholomorphic to each other.
\end{proposition}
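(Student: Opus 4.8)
The plan is to deduce this statement almost directly from Theorem \ref{injectivityofPhiH}, which asserts that for every $m\geq 3$ the holomorphic map $\Psi^H_m:\,\mathcal{T}^H_m\to A\cap D$ is a biholomorphism onto the bounded domain $A\cap D\subset A\simeq\mathbb{C}^N$. The heart of the matter is the observation that the target $A\cap D$ does not depend on $m$. Indeed, the Abelian subalgebra $\mathfrak{a}=\Phi_*(T^{1,0}_p\mathcal{T})\subset\mathfrak{n}_+$, hence the subgroup $A=\exp(\mathfrak{a})\subset N_+$ and the domain $A\cap D$, are constructed only from the Teichm\"uller space $\mathcal{T}$, the period map $\Phi:\,\mathcal{T}\to D$, the fixed base point $p\in\mathcal{T}$, and the fixed adapted basis $\{\eta_0,\dots,\eta_{m-1}\}$ at $\Phi(p)$; none of these involves the level structure.

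First I would recall from $\S$\ref{moduli and teichmuller} that the Teichm\"uller space $\mathcal{T}$, its universal family, and the period map $\Phi$ are canonically independent of $m$, since $\mathcal{T}_m$ is biholomorphic to $\mathcal{T}_{m'}$ for all $m,m'\geq 3$. Thus one may fix once and for all a base point $p\in\mathcal{T}$, an adapted basis for the Hodge decomposition at $\Phi(p)$, and the resulting groups $N_+\supset A$, all independent of $m$; with these choices the maps $\Psi^H_m$ of Theorem \ref{injectivityofPhiH} all have the same target $A\cap D$. Then I would simply compose: the map
$$(\Psi^H_{m'})^{-1}\circ\Psi^H_m:\,\mathcal{T}^H_m\longrightarrow A\cap D\longrightarrow \mathcal{T}^H_{m'}$$
is a biholomorphism because each factor is one, by Theorem \ref{injectivityofPhiH}. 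This produces the required biholomorphism between $\mathcal{T}^H_m$ and $\mathcal{T}^H_{m'}$.

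The only point needing care --- the (mild) main obstacle --- is to be sure that the construction of the target $A\cap D$ is literally the same across different values of $m$, i.e.\ that nothing in the passage from $\mathcal{Z}_m$ to its Hodge metric completion $\mathcal{Z}^H_m$ (nor in the auxiliary smooth projective compactification $\bar{\mathcal{Z}}_m$, which genuinely does depend on $m$) enters the definition of the target. This is handled by the fact that the Hodge metric completion is an intrinsic notion and that Theorem \ref{injectivityofPhiH} realizes $\mathcal{T}^H_m$ as $A\cap D$ purely through the extended period map $\Phi^H_m$ and the projection $P$ onto $A$; since $\Phi$ and the chosen base data are $m$-independent, so is $A\cap D$, and the argument goes through.
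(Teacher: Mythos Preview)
Your proposal is correct and matches the paper's approach: the paper states that Proposition~\ref{indepofm} follows directly from Theorem~\ref{injectivityofPhiH}, precisely because each $\Psi^H_m$ identifies $\mathcal{T}^H_m$ biholomorphically with the $m$-independent domain $A\cap D$. Your additional care in verifying that $A\cap D$ depends only on the Teichm\"uller data $(\mathcal{T},\Phi,p)$ and not on the level structure is exactly the point the paper leaves implicit.
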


This allows us to introduce simplified notations.

 \begin{definition}We define the complete complex manifold $\mathcal{T}^H=\mathcal{T}^H_{m}$, the holomorphic map $i_{\mathcal{T}}: \,\mathcal{T}\to \mathcal{T}^H$ by $i_{\mathcal{T}}=i_m$, and the extended period map $\Phi^H:\, \mathcal{T}^H\rightarrow D$ by $\Phi^H=\Phi^H_{m}$ for any $m\geq 3$.
In particular, with these new notations, we have the commutative diagram
\begin{align*}
\xymatrix{\mathcal{T}\ar[r]^{i_{\mathcal{T}}}\ar[d]^{\pi_m}&\mathcal{T}^H\ar[d]^{\pi^H_{m}}\ar[r]^{{\Phi}^{H}}&D\ar[d]^{\pi_D}\\
\mathcal{Z}_m\ar[r]^{i}&\mathcal{Z}^H_{m}\ar[r]^{{\Phi}_{_{\mathcal{Z}_m}}^H}&D/\Gamma.
}
\end{align*}
\end{definition}

\begin{proposition}\label{i_T}
Let $\T_{0}\subset \T^{H}$ be defined by $\T_{0}:=i_{\T}(\T)$. Then $\T_{0}$ is biholomorphic to the Torelli space $\T'$.
\end{proposition}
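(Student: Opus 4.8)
The plan is to identify $\T_0 = i_\T(\T)$ with $\T'$ by producing a biholomorphism, using the commutative diagrams \eqref{cover1} and \eqref{periods} together with the defining property $\T_m = (\pi_m^H)^{-1}(\mathcal Z_m)$ recalled before Lemma \ref{Riemannextension}, and the biholomorphicity of $\Psi^H_m$ (hence of $\Phi^H$ onto $N_+\cap D$) from Theorem \ref{injectivityofPhiH}. First I would note that $i_\T = i_m$ maps $\T$ into $\T_m = (\pi_m^H)^{-1}(\mathcal Z_m)$, since $\pi_m^H\circ i_m = i\circ\pi_m$ lands in $\mathcal Z_m$; combined with $\text{codim}_{\mathbb C}(\T^H\setminus\T_m)\geq 1$, this shows $\T_0\subseteq\T_m$. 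The key observation is that the covering map $\pi: \T\to\T'$ from \eqref{cover1} and the restriction $\pi_m^H|_{\T_m}:\T_m\to\mathcal Z_m$ fit together: both $\T'$ and $\T_m$ are covers of $\mathcal Z_m$ with covering maps $\pi_m'$ and $\pi_m^H|_{\T_m}$ respectively, and I want to show they are, in a precise sense, the \emph{same} cover, at least after restricting to the image of $\T$.

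The concrete steps: (i) Show that $\pi_m^H|_{\T_m}:\T_m\to\mathcal Z_m$ is a covering map — this follows because $\pi_m^H$ is (by construction in \cite{CGL}) a covering map onto $\mathcal Z^H_m$ and $\T_m$ is exactly the preimage of $\mathcal Z_m$, so the restriction is a covering onto $\mathcal Z_m$. (ii) Observe that $\T_0 = i_m(\T)$ is the image of a connected space, hence lies in a single connected component; more importantly, since $\Phi^H$ is injective (Theorem \ref{injectivityofPhiH}) and $\Phi = \Phi^H\circ i_\T$, the map $i_\T:\T\to\T_0$ is injective, so $i_\T:\T\to\T_0$ is a continuous bijection. (iii) Because $\pi_m = (\pi_m^H\circ i_\T)$ factors through $\T_0$ and $\pi_m$ is the universal covering of $\mathcal Z_m$ while $\pi_m^H|_{\T_0}:\T_0\to\mathcal Z_m$ is a covering, the universal property of the covering $\pi_m$ forces $i_\T:\T\to\T_0$ to be a covering map as well; being an injective covering map onto a connected space, it is a homeomorphism, and since it is holomorphic with holomorphic local inverses (both spaces being complex manifolds and $i_\T$ a local biholomorphism, which one checks via $\Phi^H$ being a local biholomorphism onto $N_+\cap D$ by Theorem \ref{injectivityofPhiH}), it is a biholomorphism $\T\cong\T_0$. (iv) Finally, compose with the covering $\pi:\T\to\T'$: I must check $\pi$ is itself a biholomorphism. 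For this, use that $\pi_m'\circ\pi = \pi_m$ and that $\pi:\T\to\T'$, being a covering, is trivial iff $\T'$ is simply connected — but rather than that, the cleaner route is to run steps (i)–(iii) with $\T'$ in place of $\T_0$: $\T'$ is a cover of $\mathcal Z_m$ via $\pi_m'$, and $\T$ is the universal cover, so $\pi:\T\to\T'$ expresses $\T$ as a cover of $\T'$; then transport the affine/period structure — $\Phi'\circ\pi = \Phi$ from \eqref{periods} and $\Phi$ injective force $\pi$ injective, hence $\pi$ is a biholomorphism. Combining, $\T_0\cong\T\cong\T'$.

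The main obstacle I anticipate is step (iv), i.e.\ genuinely proving $\pi:\T\to\T'$ is injective (equivalently that $\T'$ is already simply connected, so that the Torelli space coincides with the Teichm\"uller space). The injectivity of $\Phi'$ on $\T'$ is essentially the global Torelli theorem, which is asserted to hold in \cite{CGL}; if one is allowed to invoke it, then $\Phi'\circ\pi=\Phi$ with $\Phi$ injective gives $\pi$ injective immediately. The subtlety is making sure one is not arguing in a circle — the global Torelli theorem for $\T'$ and the statement $\T\cong\T'$ are closely linked — so I would want to trace precisely which of these is established independently in \cite{CGL}; most likely the correct logical order is that $\Psi^H_m$ being injective (Theorem \ref{injectivityofPhiH}) is proven first, then descended to show $\Phi^H$ injective, then restricted to $\T_0\subseteq\T^H$ to get injectivity of $\Phi$ and of $i_\T$, and finally the identification $\T_0\cong\T'$ is read off from the fact that both are the connected preimage of $\mathcal Z_m$ inside a space on which the period map is injective. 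A careful write-up should make this chain explicit and cite the relevant lemmas from \cite{CGL} at each link.
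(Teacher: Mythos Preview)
Your plan has a genuine gap, and it stems from trying to route the biholomorphism through $\T$ via $\T_0\cong\T\cong\T'$. Neither half of this can be established the way you propose.

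In step (ii) you write that ``$\Phi^H$ injective and $\Phi=\Phi^H\circ i_\T$ imply $i_\T$ injective''. This is backwards: injectivity of the \emph{outer} factor $\Phi^H$ tells you nothing about $i_\T$; you would need injectivity of the composite $\Phi$, which is not available at this point. Concretely, $i_\T=i_m$ is a lift of $i\circ\pi_m$, and two points $p,q\in\T$ with $i_\T(p)=i_\T(q)$ differ by a deck transformation in the kernel of $\pi_1(\mathcal Z_m)\to\pi_1(\mathcal Z_m^H)$. Since $\mathcal Z_m^H\setminus\mathcal Z_m$ has complex codimension one (real codimension two), this kernel can be non-trivial, so $i_\T$ need not be injective. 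The same objection applies to your suggested fix ``restrict $\Phi^H$ to $\T_0$ to get injectivity of $\Phi$'': that only gives injectivity of $\Phi^H|_{\T_0}$, not of $\Phi=\Phi^H|_{\T_0}\circ i_\T$.

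Step (iv) is circular in exactly the way you feared. In this paper the global Torelli theorem for $\Phi'$ is \emph{deduced from} Proposition~\ref{i_T}, via the formula $\Phi'=\Phi^H|_{\T_0}\circ\pi_0^{-1}$ displayed immediately after it; you cannot invoke injectivity of $\Phi'$ here. More fundamentally, there is no reason to expect $\pi:\T\to\T'$ to be a biholomorphism: $\T$ is the universal cover of $\mathcal Z_m$, whereas $\T'$ is the intermediate cover classifying genuine markings, and $\pi$ may well have non-trivial deck group.

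The paper's route avoids $\T$ entirely and builds $\pi_0:\T_0\to\T'$ directly; the phrase ``the markings of the Calabi--Yau manifolds come into play substantially'' is the key. A point $q\in\T_0$ lies over some $(M,L,[\gamma]_m)\in\mathcal Z_m$, and the value $\Phi^H(q)\in D$ is a specific Hodge filtration in the fixed reference lattice $(\Lambda,Q_0)$; comparing it with the intrinsic Hodge filtration of $M$ singles out an actual marking $\gamma$ in the class $[\gamma]_m$, hence a point of $\T'$. One then checks that $\pi_0$ so defined is a biholomorphism of covers of $\mathcal Z_m$, fitting into the commutative diagram displayed after the proposition. The argument is carried out in detail in \cite{CGL}; what matters for your revision is that the identification $\T_0\cong\T'$ is made \emph{fiberwise over $\mathcal Z_m$ via markings}, not by first collapsing to $\T$.
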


The proof of Proposition \ref{i_T} is equivalent to constructing a bi-holomorphic map $\pi_{0}:\, \T_{0}\to \T'$, which fits into the following commutative diagram
$$\xymatrix{
\T \ar[dr]^-{\pi}\ar[rr]^-{i_{\T}} \ar[dd]^-{\pi_m} &&\T_{0}\ar[dl]^-{\pi_{0}}  \ar[dd]^-{\pi_{m}^{H}|_{\T_{0}}}\ar[rr]^-{\Phi^{H}|_{\T_{0}}}  &&D\ar[dd]^-{\pi_{D}}\\
&\T'\ar[dl]_-{\pi'_{m}}\ar[urrr]^-{\Phi'}&&&\\
\mathcal{Z}_{m} \ar[rr]^-{i_{m}} &&\mathcal{Z}_{m}^{H} \ar[rr]^-{\Phi_{\mathcal{Z}_{m}^{H}}} &&D/\Gamma .}$$
Here the markings of the Calabi--Yau manifolds come into play substantially.

From Proposition \ref{i_T}, we can see that the complete complex manifold $\mathcal{T}^H$ is actually the completion space of the Torelli space $\T'$ with respect to the Hodge metric.

Since the restriction map $\Phi^{H}|_{\T_{0}}$ is injective and $\Phi'=\Phi^{H}|_{\T_{0}}\circ (\pi_{0})^{-1}$, we get the global Torelli theorem for the period map $\Phi':\, \T'\to D$ from the Torelli space to the period domain as follows.
\begin{theorem}[Global Torelli theorem]
The period map $\Phi':\, \mathcal{T}'\rightarrow D$ is injective.
\end{theorem}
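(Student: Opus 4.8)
The plan is to obtain the injectivity of $\Phi':\,\mathcal{T}'\to D$ as a formal consequence of two facts already established above: the injectivity of the extended period map $\Phi^H$ on the Hodge metric completion $\mathcal{T}^H$, and the identification of the Torelli space $\mathcal{T}'$ with the subset $\mathcal{T}_0=i_{\mathcal{T}}(\mathcal{T})\subseteq\mathcal{T}^H$.

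First I would recall that, by definition, $\Phi^H=\Phi^H_m$ and $\mathcal{T}^H=\mathcal{T}^H_m$ for any fixed $m\geq 3$, and that Theorem \ref{injectivityofPhiH} already asserts the injectivity of the holomorphic map $\Phi^H_m:\,\mathcal{T}^H_m\to N_+\cap D$. Composing with the inclusion $N_+\cap D\hookrightarrow D$, this shows that $\Phi^H:\,\mathcal{T}^H\to D$ is injective. Restricting an injective map to the open submanifold $\mathcal{T}_0\subseteq\mathcal{T}^H$ keeps it injective, so $\Phi^H|_{\mathcal{T}_0}:\,\mathcal{T}_0\to D$ is injective.

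Next I would invoke Proposition \ref{i_T}, which supplies a biholomorphism $\pi_0:\,\mathcal{T}_0\to\mathcal{T}'$ compatible with all the period and covering maps in the commutative diagram there; in particular $\Phi'\circ\pi_0=\Phi^H|_{\mathcal{T}_0}$, equivalently $\Phi'=(\Phi^H|_{\mathcal{T}_0})\circ\pi_0^{-1}$. Since $\pi_0^{-1}$ is a bijection and $\Phi^H|_{\mathcal{T}_0}$ is injective, the composite $\Phi'$ is injective. This finishes the argument.

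As for where the genuine difficulty lies: the statement itself is a short formal deduction from the cited results, so the substantive obstacles are hidden in their proofs. The injectivity of $\Phi^H_m$ rests on the global holomorphic affine structure on $\mathcal{T}^H_m$ (Theorem \ref{THmaffine}), the completeness of the Hodge metric, the Griffiths--Wolf covering-space argument, and the straight-line/affine-map contradiction that upgrades covering to injection; while Proposition \ref{i_T} requires careful bookkeeping of the level-$m$ structures and markings to build $\pi_0$ compatibly with $\pi$, $\pi_m'$, $\pi_m^H|_{\mathcal{T}_0}$, $\Phi'$ and $\Phi^H|_{\mathcal{T}_0}$. Given these inputs, no further estimate or construction is needed for the theorem as stated.
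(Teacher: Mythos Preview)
Your proof is correct and follows essentially the same route as the paper: the paper deduces the theorem in one line from the injectivity of $\Phi^H|_{\mathcal{T}_0}$ (a consequence of Theorem~\ref{injectivityofPhiH}) together with the factorization $\Phi'=\Phi^H|_{\mathcal{T}_0}\circ(\pi_0)^{-1}$ supplied by Proposition~\ref{i_T}. Your added commentary on where the real work is hidden is accurate and matches the structure of the surrounding sections.
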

Using the completeness of $\mathcal{T}^H$ and the injectivity of $\Phi^H$, together with the function $f: \, D\to \mathbb{R}$ which is constructed in Theorem 8.1 in \cite{GS}, we can construct a plurisubharmornic exhaustion function $\mathcal{T}^H$. This shows that $\mathcal{T}^H$ is a bounded domain of holomorphy in $\mathbb{C}^N$. Moreover, the existence of the K\"ahler-Einstein metric follows directly from a theorem of Mok--Yau in \cite{MokYau}.
\begin{theorem}The Hodge metric completion space $\mathcal{T}^H$ of the Torelli space $\T'$ is a bounded domain of holomorphy in $A\simeq \mathbb{C}^N$; thus there exists a complete K\"ahler--Einstein metric on $\mathcal{T}^H$.
\end{theorem}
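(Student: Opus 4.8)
The plan is to deduce the statement from results already recalled in Section \ref{affine structure} and above in the present section — which together identify $\mathcal{T}^H$, via $\Psi^H$, with a bounded domain in $\C^N$ carrying a holomorphic affine structure — so that only one extra point remains, namely that this bounded domain is a domain of holomorphy; the complete K\"ahler--Einstein metric is then supplied by a theorem of Mok--Yau.

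\emph{Step 1 (realising $\mathcal{T}^H$ as a bounded domain).} First I would note that by Theorem \ref{injectivityofPhiH}, combined with Propositions \ref{indepofm} and \ref{i_T}, the holomorphic map $\Psi^H:=P\circ\Phi^H\colon \mathcal{T}^H\to A\cap D$ is a biholomorphism onto the open subset $A\cap D$ of the affine space $A\cong\C^N$, and that by Proposition \ref{bound} and Theorem \ref{image} the set $N_+\cap D$, hence also its image $A\cap D$ under the linear projection onto $A\subseteq N_+\cong\C^d$, is bounded for the Euclidean metric. Thus $\mathcal{T}^H$ is biholomorphic to a bounded domain $\Omega:=A\cap D\subset\C^N$, and it remains only to show $\Omega$ is a domain of holomorphy; being a domain in $\C^N$, this is equivalent to the existence of a continuous plurisubharmonic exhaustion function on $\Omega$, equivalently on $\mathcal{T}^H$.

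\emph{Step 2 (the plurisubharmonic exhaustion function).} Next I would pull back to $\mathcal{T}^H$ the function $f\colon D\to\R$ of Theorem 8.1 in \cite{GS}: $f$ is nonnegative, exhausts $D$ with respect to the complete homogeneous Hodge metric $h$, and has complex Hessian positive along the horizontal tangent directions $T^{1,0}_hD$. Since $\Phi^H\colon\mathcal{T}^H\to D$ is holomorphic and horizontal — the period map $\Phi$ is horizontal, and horizontality transfers to the holomorphic extension $\Phi^H$ across the analytic subset $\mathcal{T}^H\setminus\mathcal{T}_0$ by continuity of the differential — the function $u:=f\circ\Phi^H$ is plurisubharmonic on $\mathcal{T}^H$, in fact strictly so because $\Phi^H$ is an immersion (the identity $d\Psi^H=dP\circ d\Phi^H$ forces $d\Phi^H$ to be injective, as $\Psi^H$ is biholomorphic). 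For the exhaustion property I would use that the Hodge metric on $\mathcal{T}^H$ is complete, so that $\mathcal{T}^H$ is a proper metric space by Hopf--Rinow, together with the injectivity of $\Phi^H$ and the relation $h_{\mathcal{T}^H}=(\Phi^H)^*h$, in order to transfer the properness of $f$ on $(D,h)$ to the statement that $u\to+\infty$ along every sequence of $\mathcal{T}^H$ leaving all compact subsets. This makes $\Omega\cong\mathcal{T}^H$ a bounded domain of holomorphy in $\C^N$.

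\emph{Step 3 (conclusion) and the main obstacle.} Finally I would invoke the theorem of Mok--Yau \cite{MokYau}, according to which every bounded domain of holomorphy in $\C^N$ carries a unique complete K\"ahler--Einstein metric of negative Ricci curvature; applied to $\Omega\cong\mathcal{T}^H$ it yields the asserted metric. The hard part will be Step 2, and within it the exhaustion property of $u$: its plurisubharmonicity is a formal consequence of the horizontality of $\Phi^H$ and the Hessian positivity of $f$, but checking that $u$ is proper demands a careful comparison of the completeness of the Hodge metric on $\mathcal{T}^H$ with the behaviour of $f$ and of $\Phi^H$ near $\partial D$ — precisely the place where completeness of $\mathcal{T}^H$ and injectivity of $\Phi^H$ from Theorem \ref{injectivityofPhiH} are needed.
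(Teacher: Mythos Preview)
Your proposal is correct and follows essentially the same approach as the paper: realise $\mathcal{T}^H$ as a bounded domain via the results of Section~3, pull back the Griffiths--Schmid function $f$ of \cite[Theorem~8.1]{GS} along $\Phi^H$ to obtain a plurisubharmonic exhaustion (using horizontality for plurisubharmonicity and completeness plus injectivity of $\Phi^H$ for properness), and then invoke Mok--Yau~\cite{MokYau}. The paper itself only sketches this in a single sentence before the theorem statement, so your write-up in fact supplies the details the paper omits; one small caution is that Proposition~\ref{bound} and Theorem~\ref{image} assert boundedness of the \emph{image} $\Phi(\mathcal{T})$ in $N_+$, not of $N_+\cap D$ itself, so the boundedness of $A\cap D$ should be deduced from the boundedness of $\Phi^H$ (Lemma~\ref{Riemannextension}) together with the surjectivity of $\Psi^H$ onto $A\cap D$.
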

We remark that we also proved such global Torelli theorem for polarized and marked Calabi--Yau type manifolds in \cite{CGL2}.

\section{Applications}\label{App}

This section contains several applications of the results reviewed in the previous sections.
In Section \ref{surjectivity} we prove a general result for the extended period map to be a biholomorphic map from $\mathcal{T}^H$, the Hodge metric completion of the Torelli space $\mathcal{T}'$ of Calabi--Yau manifolds to the corresponding period domain; and apply this result to the cases of K$3$ surfaces and cubic fourfolds. Such surjectivity results of the period maps may be obtained for the case of polarized and marked hyperk\"ahler manifolds with weight 2 primitive cohomology, as well as certain refined period maps on cubic threefolds. In Section \ref{globalSection} we construct explicit holomorphic sections of the Hodge bundles on $\mathcal{T}^H$, which trivialize those Hodge bundles. In particular, for Calabi--Yau manifolds, a global holomorphic section of holomorphic $(n, 0)$-classes on $\mathcal{T}^H$ is constructed, which coincides with explicit local Taylor expansion in the affine coordinates at any base point $p$ in $\mathcal{T}^H$. Finally in Section \ref{split} we prove a global splitting property for the Hodge bundles, as well as a theorem asserting that a compact polarized and marked Calabi--Yau manifold with complex structure $J$ can not be deformation equivalent to a polarized and marked Calabi--Yau manifolds with conjugate complex structure $-J$. We remark that the same results in Sections 4.2 and 4.3 also hold on the Teichm\"uller space with the same proofs.

\subsection{Surjectivity of the period map on the Hodge metric completion space}\label{surjectivity}In this section we use our results on the Hodge metric completion space $\mathcal{T}^H$ to give a simple proof of the surjectivity of the period maps of K$3$ surfaces and cubic fourfolds. First we have the following general result for polarized and marked Calabi--Yau manifolds and Calabi--Yau type manifolds.
\begin{theorem}\label{surj}If $\dim\mathcal{T}^H=\dim D$, then the extended period map $\Phi^H: \mathcal{T}^H\to D$ is surjective on $\mathcal{T}^H$.
\end{theorem}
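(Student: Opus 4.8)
The plan is to prove that $\Phi^H(\mathcal{T}^H)$ is a nonempty subset of $D$ which is at once open and closed; since $D$ is connected, this forces $\Phi^H(\mathcal{T}^H)=D$. Openness is immediate and is the only place the dimension hypothesis enters: by (the proof of) Theorem~\ref{THmaffine}, the map $\Psi^H_m=P\circ\Phi^H_m$ is a local embedding, so its first factor $\Phi^H=\Phi^H_m$ is in particular a holomorphic immersion of $\mathcal{T}^H$ into $D$, and when $\dim\mathcal{T}^H=\dim D$ a holomorphic immersion between equidimensional complex manifolds is a local biholomorphism, hence an open map.

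For closedness I would exploit that $\mathcal{T}^H$ is complete in the Hodge metric. By construction this metric is the pull-back $(\Phi^H)^{*}h$ of the complete homogeneous Hodge metric $h$ on $D$: the two agree on the dense open subset $\mathcal{T}_m=i_{\mathcal{T}}(\mathcal{T})\subset\mathcal{T}^H$, and both are smooth K\"ahler metrics on all of $\mathcal{T}^H$ (the pull-back being genuinely positive definite since $\Phi^H$ is an immersion everywhere), so they coincide. Thus $\Phi^H:\,\mathcal{T}^H\to(D,h)$ is a local isometry from a complete Riemannian manifold to a connected one, and a local isometry with complete source is a surjective Riemannian covering map onto a connected target --- this is exactly the Griffiths--Wolf type argument \cite{GW} already invoked for $\Psi^H_m$ in Section~\ref{affine structure}. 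In particular $\Phi^H$ is surjective, which is the assertion of the theorem (and, a fortiori, $\Phi^H(\mathcal{T}^H)$ is closed).

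The step requiring the most care is the identification of the Hodge metric on the completion $\mathcal{T}^H$ with $(\Phi^H)^{*}h$ across the boundary locus $\mathcal{T}^H\setminus\mathcal{T}_m$ --- equivalently, the fact that $\Phi^H$ stays an immersion there --- which rests on Theorem~\ref{THmaffine} and the Riemann-extension arguments recalled in Section~\ref{affine structure}. An alternative route to closedness, which sidesteps this, runs entirely downstairs: the extended period map $\Phi^H_{\mathcal{Z}_m}:\,\mathcal{Z}^H_m\to D/\Gamma$ is proper, holomorphic, and locally injective on a dense open set, so by Remmert's proper mapping theorem its image is a closed analytic subvariety of $D/\Gamma$ of dimension $\dim\mathcal{Z}^H_m=\dim D$, hence all of $D/\Gamma$; one then lifts an arbitrary $o\in D$ through the covering $\pi^H_m:\,\mathcal{T}^H\to\mathcal{Z}^H_m$, using that $\Phi^H(\mathcal{T}^H)$ is $\Gamma$-invariant (the Picard--Lefschetz transformations around $\mathcal{Z}^H_m\setminus\mathcal{Z}_m$ being trivial, the monodromy $\rho$ factors through $\pi_1(\mathcal{Z}^H_m)$ with image all of $\Gamma$, and $\mathcal{T}^H$ is the universal cover of $\mathcal{Z}^H_m$). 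Finally, I would remark that combining the surjectivity just proved with the injectivity of Theorem~\ref{injectivityofPhiH} upgrades $\Phi^H$ to a biholomorphism $\mathcal{T}^H\cong D$ whenever $\dim\mathcal{T}^H=\dim D$.
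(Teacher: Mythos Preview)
Your argument is correct and follows essentially the same open--closed--connected strategy as the paper: openness from $\Phi^H$ being a local isomorphism under the equidimensionality hypothesis, closedness from completeness of $\mathcal{T}^H$ in the Hodge metric, and then connectedness of $D$. You supply considerably more detail than the paper's three-line proof---in particular, you make explicit the identification of the Hodge metric on $\mathcal{T}^H$ with $(\Phi^H)^*h$ and invoke the Griffiths--Wolf covering criterion to obtain surjectivity directly, whereas the paper simply asserts that completeness forces the image to be closed---and your alternative route via Remmert's theorem downstairs is a nice backup, but the core line of reasoning is the same.
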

\begin{proof}
Since $\dim \mathcal{T}^H=\dim D$, the property that $\Phi^H:\,\mathcal{T}^H\rightarrow D$ is an local isomorphism shows that the image of $\mathcal{T}^H$ under the extended period map $\Phi^H$ is open in $D$. On the other hand, the completeness of $\mathcal{T}^H$ with respect to Hodge metric implies that the image of $\mathcal{T}^H$ under $\Phi^H$ is closed in $D$. As $\mathcal{T}^H$ is not empty and that $D$ is connected, we can conclude that $\Phi^H(\mathcal{T}^H)=D$.
\end{proof}
It is well known that for K$3$ surfaces, which are two dimensional Calabi--Yau manifolds, we have $\dim\mathcal{T}^H=\dim\mathcal{T}=\dim D=19$; for cubic fourfolds, they are Calabi--Yau type manifolds. One knows that both K$3$ and cubic fourfolds have smooth moduli spaces with level $m$ structure for big enough $m$, and $\dim\mathcal{T}^H=\dim\mathcal{T}=\dim D=20$. Thus Theorem \ref{surj} can be applied to conclude that that the extended period from the Hodge metric completion of Torelli space to the period domain is surjective for polarized and marked K3 surfaces or cubic fourfolds. 

Let $\mathcal{T}'$ be the Torelli space of hyperk\"ahler manifolds and $H^2_{pr}(M,\mathbb{C})$ the degree $2$ primitive cohomology group. Let $D$ the period domain of weight two Hodge structures on $H^2_{pr}(M, \mathbb{C})$. Then our method in \cite{CGL} can be applied to prove that the period map from $\mathcal{T}'$ and $D$ is injective. Moreover, we also know that the Hodge completion space $\mathcal{T}^H$ of the Torelli space has the same dimension as the period domain $D$ of weight two Hodge structures on $H^2_{pr}(M, \mathbb{C})$. Thus Theorem \ref{surj} can also applied to conclude that the extended period map from $\mathcal{T}^H$ to $D$ is surjective. One may refer to \cite{ver} and \cite{Huy} for different injectivity and surjectivity results for hyperk\"ahler manifolds. To conclude we have the the following corollary. 

\begin{corollary}
Let $\mathcal{T}^H$ be the Hodge metric completion space of the Torelli space for K$3$ surfaces, cubic fourfolds, or hyperk\"ahler manifolds. Then the extended period map $\Phi^H: \,\mathcal{T}^H\rightarrow D$ is surjective.
\end{corollary}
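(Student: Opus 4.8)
The plan is to reduce everything to Theorem \ref{surj}: once we know that $\dim\mathcal{T}^H=\dim D$ for the appropriate period domain $D$ in each of the three families, surjectivity of $\Phi^H$ follows at once. So the only real work is the dimension count, and I would first record a couple of reductions that trim it down. Since $\mathcal{T}^H$ is the completion of the Torelli space and contains a dense open submanifold $\mathcal{T}_0=i_{\mathcal{T}}(\mathcal{T})$ biholomorphic to $\mathcal{T}'$ (Proposition \ref{i_T}), and since $\pi:\mathcal{T}\to\mathcal{T}'$ is a covering map, we have $\dim\mathcal{T}^H=\dim\mathcal{T}'=\dim\mathcal{T}$; equivalently, by Theorem \ref{injectivityofPhiH}, $\mathcal{T}^H$ is an open subset of $A\simeq\mathbb{C}^N$ with $N=\dim\mathcal{T}$. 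Moreover, by the local Torelli theorem (classical for K$3$, hyperk\"ahler manifolds and cubic fourfolds) the period map is an immersion, so $\dim\mathcal{T}\le\dim D$ always holds and only the reverse inequality must be verified. All of this presupposes that the relevant level-$m$ moduli space is smooth for $m$ large, which for these three families is known and is precisely what makes the machinery of $\S$\ref{affine structure}--\ref{App} applicable.

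Next I would run the three computations. For a polarized K$3$ surface the primitive weight-two Hodge structure has Hodge numbers $(1,19,1)$, so $D$ is the corresponding type IV domain of dimension $h^{2,0}_{pr}\cdot h^{1,1}_{pr}=19$, while the unobstructed polarized deformation space has dimension $h^{1,1}_{pr}=19$; hence $\dim\mathcal{T}=19=\dim D$. For a smooth cubic fourfold, which is a Calabi--Yau type manifold, the primitive weight-four Hodge structure has Hodge numbers $(0,1,20,1,0)$, the associated period domain has dimension $h^{3,1}_{pr}\cdot h^{2,2}_{pr}=20$, and the moduli space of cubic fourfolds is classically $\binom{8}{3}-1-\dim\mathrm{PGL}_6=20$-dimensional; hence $\dim\mathcal{T}=20=\dim D$. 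For a polarized hyperk\"ahler manifold one takes $D$ to be the period domain of weight-two polarized Hodge structures on $H^2_{pr}(M,\mathbb{C})$; the Beauville--Bogomolov form cuts out a quadric in $\mathbb{P}(H^2_{pr,\mathbb{C}})$, giving $\dim D=h^{1,1}_{pr}$, and by Bogomolov's unobstructedness the polarized deformation space is smooth of the same dimension $h^{1,1}_{pr}$; hence $\dim\mathcal{T}^H=\dim D$.

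With the dimensions matched I would simply invoke Theorem \ref{surj} in each case: $\dim\mathcal{T}^H=\dim D$ makes $\Phi^H$ a local isomorphism, so $\Phi^H(\mathcal{T}^H)$ is open in $D$; completeness of $\mathcal{T}^H$ in the Hodge metric makes $\Phi^H(\mathcal{T}^H)$ closed in $D$; and since $D$ is connected and $\mathcal{T}^H\ne\emptyset$ we conclude $\Phi^H(\mathcal{T}^H)=D$. I do not expect a serious obstacle: the argument is essentially bookkeeping. The one point that genuinely needs care is identifying the correct period domain in each family — weight two on $H^2_{pr}$ for K$3$ and hyperk\"ahler manifolds, weight four of Calabi--Yau type on $H^4_{pr}$ for cubic fourfolds — and confirming that the smoothness of the level-$m$ moduli space (required for $\S$1--3 to carry over verbatim) is available; once those identifications are fixed, the dimension counts above are standard and the corollary is immediate.
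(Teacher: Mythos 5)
Your proposal is correct and follows essentially the same route as the paper: reduce to Theorem \ref{surj} and check $\dim\mathcal{T}^H=\dim D$ in each of the three cases (19 for K$3$, 20 for cubic fourfolds, $h^{1,1}_{pr}$ for hyperk\"ahler manifolds with the weight-two primitive period domain). The paper simply asserts these dimension equalities and the smoothness of the level-$m$ moduli spaces as known facts, whereas you spell out the standard counts; the substance is identical.
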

\subsection{Global holomorphic sections of the Hodge bundles}\label{globalSection}
In this section we prove the existence and study the property of global holomorphic sections of the Hodge bundles $\{F^k\}_{k=0}^n$ over the Hodge metric completion space $\mathcal{T}^H$ of the Torelli space of Calabi--Yau manifolds. Same results hold on the Techm\"uller space by pulling back through the covering map $i_\T:\, \T\to \T^H$.

Recall that we have fixed a base point $p\in\mathcal{T}^{H}$ and an adapted basis $\{\eta_0, \cdots, \eta_{m-1}\}$ for the Hodge decomposition of the base point $\Phi^{H}(p)\in D$. With the fixed base point in $D$, we can identify $N_+$ with its unipotent orbit in $\check{D}$ by identifying an element $c\in N_+$ with $[c]=cB$ in $\check{D}$.
On one hand, as we have fixed an adapted basis $\{\eta_0, \cdots, \eta_{m-1}\}$ for the Hodge decomposition of the base point. Then elements in $G_{\mathbb{C}}$ can be identified with a subset of the nonsingular block matrices. In particular, the set $N_+$ is identified with its unipotent orbit in $\check{D}$. Then elements in $N_+$ can be realized as nonsingular block lower triangular matrices whose diagonal blocks are all identity submatrix. Namely, for any element $\{F^k_o\}_{k=0}^n\in N_+\subseteq\check{D}$, there exists a unique nonsingular block lower triangular matrices $A(o)\in G_{\mathbb{C}}$ such that
$(\eta_0, \cdots, \eta_{m-1})A(o)$ is an adapted basis for the Hodge filtration $\{F^k_o\}\in N_+$ that represents this element in $N_+$. Similarly, any elements in $B$ can be realized as nonsingular block upper triangular matrices in $G_{\mathbb{C}}$. Moreover, as $\check{D}=G_{\mathbb{C}}/B$, we have that for any $U\in G_{\mathbb{C}}$, which is a nonsingular block upper triangular matrix, $(\eta_0, \cdots, \eta_{m-1})A(o)U$ is also an adapted basis for the Hodge filtration $\{F^k(o)\}_{k=0}^n$. Conversely, if $(\zeta_0, \cdots, \zeta_{m-1})$ is an adapted basis for the Hodge filtration $\{F^k_o\}_{k=0}^n$, then there exists a unique $U\in G_{\mathbb{C}}$ such that $(\zeta_0, \cdots, \zeta_{m-1})=(\eta_0, \cdots, \eta_{m-1})A(o)U$. For any $q\in\mathcal{T}^{H}$, let us denote the Hodge filtration at $q\in \mathcal{T}^{H}$ by $\{F_q^k\}_{k=0}^n$, then we have that $\{F_q^k\}_{k=0}^n\in N_+\cap D$ by Theorem \ref{injectivityofPhiH}. Thus there exists a unique nonsingular block lower triangular matrices $\tilde{A}(q)$ such that $(\eta_0, \cdots, \eta_{m-1})\tilde{A}(q)$ is an adapted basis for the Hodge filtration $\{F^k_q\}_{k=0}^{n}$.

On the other hand, for any adapted basis $\{\zeta_0(q), \cdots, \zeta_{m-1}(q)\}$ for the Hodge filtration $\{F^k_q\}_{k=0}^n$ at $q$, we know that there exists an $m\times m$ transition matrix $A(q)$ such that
$(\zeta_0(q), \cdots, \zeta_{m-1}(q))=(\eta_0, \cdots, \eta_{m-1})A(q).$
Moreover, we set the blocks of $A(q)$ as in \eqref{block} and denote the ${(i,j)}$-th block of $A(q)$ by $A^{i,j}(q)$.

As both $(\eta_0, \cdots, \eta_{m-1})\tilde{A}(q)$ and $(\eta_0, \cdots, \eta_{m-1})A(q)$ are adapted bases for the Hodge filtration for $\{F^k_q\}_{k=0}^n$, there exists a $U\in G_{\mathbb{C}}$ which is a block nonsingular upper triangular matrix such that $(\eta_0, \cdots, \eta_{m-1})\tilde{A}(q)U=(\eta_0, \cdots, \eta_{m-1})A(q).$
Therefore, we conclude that
\begin{align}\label{upper lower}\tilde{A}(q)U=A(q).
\end{align}
where $\tilde{A}(q)$ is a nonsingular block lower triangular matrix in $G_{\mathbb{C}}$ with all the diagonal blocks equal to identity submatrix, while $U$ is a block upper triangular matrix in $G_{\mathbb{C}}$. However, according to basic linear algebra, we know that a nonsingular matrix $A(q)\in G_{\mathbb{C}}$ have the decomposition of the type in \eqref{upper lower} if and only if the principal submatrices $[A^{i,j}(q)]_{0\leq i,j\leq n-k}$ are nonsingular for all $0\leq k\leq n$.

To conclude, by Theorem \ref{injectivityofPhiH}, we have that $\Phi^{H}(q)\in N_+$ for any $q\in \mathcal{T}^{H}$. Therefore, for any adapted basis $(\zeta_0(q), \cdots, \zeta_{m-1}(q))$, there exists a nonsingular block matrix $A(q)\in G_{\mathbb{C}}$ with $\det[A^{i,j}(q)]_{0\leq i,j\leq n-k}\neq 0$ for any $0\leq k\leq n$ such that
$(\zeta_0(q), \cdots, \zeta_{m-1}(q))=(\eta_0, \cdots, \eta_{m-1})A(q).$
Let $\{F^k_p\}_{k=0}^n$ be the reference Hodge filtration at the base point $p\in\mathcal{T}^{H}$.
For any point $q \in \mathcal{T}^H$ with the corresponding Hodge filtrations $\{F^k_q\}_{k=0}^n$, we define the following maps
\begin{align*}
P^k_q: \, F^k_q\to F^k_p \quad\text{for any}\quad 0\leq k\leq n
\end{align*}
to be the projection map with respect to the Hodge decomposition at the base point $p$. With the above notation, we therefore have the following lemma.  
\begin{lemma}\label{globalTransversality}
For any point $q\in\mathcal{T}^H$ and $0\leq k\leq n$, the map $P_q^k:\, F^k_q\to F^k_p$ is an isomorphism. Furthermore, $P_q^k$ depends on $q$ holomorphically.
\end{lemma}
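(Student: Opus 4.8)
The plan is to read off both claims — the isomorphism and the holomorphic dependence — from the matrix description of the period map that was set up just above the statement. By Theorem~\ref{injectivityofPhiH} we have $\Phi^H(q)\in N_+\cap D$ for every $q\in\mathcal{T}^H$, so there is a nonsingular block lower triangular matrix $\tilde{A}(q)$ with identity diagonal blocks such that $(\eta_0,\cdots,\eta_{m-1})\tilde{A}(q)$ is an adapted basis for $\{F^k_q\}_{k=0}^n$. First I would unwind what $P^k_q$ does to this adapted basis: the columns of $\tilde{A}(q)$ spanning $F^k_q$ are, modulo $\bigoplus_{r<k}H^{r,n-r}_p$, exactly the corresponding columns of the identity, because the diagonal blocks of $\tilde A(q)$ are identity submatrices and $\tilde A(q)$ is block lower triangular. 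Hence composing with the projection $F^k_q\to F^k_p$ along the base-point Hodge decomposition sends these basis vectors to a basis of $F^k_p$; concretely, in the basis $\{\eta_j\}$ the map $P^k_q$ is represented by the principal submatrix $[\tilde A^{i,j}(q)]_{0\le i,j\le n-k}$, which is block lower triangular with identity diagonal blocks, hence unipotent, hence invertible. This gives the isomorphism for every $q$ and every $0\le k\le n$.

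For the holomorphic dependence, the point is that $\tilde A(q)$ depends holomorphically on $q$. This is because $\Phi^H\colon\mathcal{T}^H\to N_+\cap D$ is holomorphic (Theorem~\ref{injectivityofPhiH}, Lemma~\ref{Riemannextension}) and, under the identification of $N_+$ with its unipotent orbit in $\check D$ via the fixed adapted basis $\{\eta_j\}$ at $p$, the matrix entries of $\tilde A(q)$ are precisely the coordinate functions of $\Phi^H(q)$ in $N_+\simeq\mathbb{C}^d$. Therefore the entries of the principal submatrices $[\tilde A^{i,j}(q)]_{0\le i,j\le n-k}$ are holomorphic in $q$, and so $P^k_q$ varies holomorphically. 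Equivalently, one phrases this in terms of a local holomorphic frame: pick a local holomorphic frame $\{\zeta_0(q),\cdots,\zeta_{m-1}(q)\}$ of the Hodge bundles near $q$ (possible since the $F^k$ are holomorphic subbundles of the trivial bundle with fiber $H^n(M,\mathbb{C})$), write $(\zeta_0(q),\cdots,\zeta_{m-1}(q))=(\eta_0,\cdots,\eta_{m-1})A(q)$ with $A(q)$ holomorphic and with all principal submatrices $[A^{i,j}(q)]_{0\le i,j\le n-k}$ nonsingular by the discussion preceding the lemma, and observe that $P^k_q$ expressed in the frame $\{\zeta_j(q)\}$ on the source and $\{\eta_j\}$ on the target is again given by the relevant principal submatrix of $A(q)$, whose entries are holomorphic.

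The only genuinely substantive point is the first one: that after choosing the adapted basis $(\eta)\tilde A(q)$ coming from the $N_+$-normalization, the projection to $F^k_p$ is unipotent rather than merely invertible. This is exactly where the block lower triangular structure with identity diagonal blocks of elements of $N_+$ is used, together with the fact that the base-point Hodge decomposition is the one defining the blocks in \eqref{block}; everything else is bookkeeping with the transition matrix $A(q)$ and the observation that its entries inherit holomorphicity from $\Phi^H$. I would also note that $\mathcal{T}^H\neq\emptyset$ and connectedness are not needed here — the statement is pointwise in $q$ plus local holomorphicity — so no global hypothesis beyond Theorem~\ref{injectivityofPhiH} enters.
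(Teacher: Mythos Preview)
Your proposal is correct and follows essentially the same approach as the paper: both identify the matrix of $P^k_q$ (in adapted bases) with the principal $(n-k+1)\times(n-k+1)$ block submatrix of the transition matrix, and both deduce invertibility from the fact that $\Phi^H(q)\in N_+$. The only cosmetic difference is that you work directly with the normalized matrix $\tilde A(q)\in N_+$ and observe the principal submatrix is unipotent, whereas the paper works with a general adapted basis giving $A(q)=\tilde A(q)U$ and invokes the nonsingularity of the principal submatrices established in the preceding discussion; your alternative phrasing via a local holomorphic frame is exactly the paper's argument.
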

\begin{proof}
We have already fixed $\{\eta_0, \cdots, \eta_{m-1}\}$ as an adapted basis for the Hodge decomposition of the Hodge structure at the base point $p$. Thus it is also the adapted basis for the Hodge filtration $\{F^k_p\}_{k=0}^n$ at the base point.
For any point $q\in \mathcal{T}^{H}$, let $\{\zeta_0, \cdots, \zeta_{m-1}\}$ be an adapted basis for the Hodge filtration $\{F^k_q\}_{k=0}^n$ at $q$. Let $[A^{i,j}(q)]_{0\leq i,j\leq n}\in G_{\mathbb{C}}$ be the transition matrix between the basis $\{\eta_0,\cdots, \eta_{m-1}\}$ and $\{\zeta_0, \cdots, \zeta_{m-1}\}$ for the same vector space $H^{n}(M, \mathbb{C})$. We have showed that $[A^{i,j}(q)]_{0\leq i,j\leq n-k}$ is nonsingular for all $0\leq k\leq n$.

On the other hand, the submatrix $[A^{i,j}(q)]_{0\leq j\leq n-k}$ is the transition matrix between the bases of $F^k_q$ and $F^0_p$ for any $0\leq k\leq n$, that is
\begin{align*}
(\zeta_0(q), \cdots, \zeta_{f^k-1}(q))=(\eta_0, \cdots, \eta_{m-1})[A^{i,j}(q)]_{0\leq j\leq n-k} \quad\text{for any}\quad 0\leq k\leq n,
\end{align*}
where $(\zeta_0(q), \cdots, \zeta_{f^k-1}(q))$ and $(\eta_0, \cdots, \eta_{m-1})$ are the bases for $F^k_q$ and $F^0_p$ respectively.
Thus the matrix of $P_q^k$ with respect to $\{\eta_0, \cdots, \eta_{f^k-1}\}$ and $\{\zeta_0, \cdots, \zeta_{f^k-1}\}$ is the first $(n-k+1)\times (n-k+1)$ principal submatrix $[A^{i,j}(q)]_{0\leq i,j\leq {n-k}}$ of $[A^{i,j}(q)]_{0\leq i,j\leq n}$. Now since $ [A^{i,j}(q)]_{0\leq i,j\leq {n-k}}$ for any $0\leq k\leq n$ is nonsingular, we conclude that the map $P_q^k$ is an isomorphism for any $0\leq k\leq n$.

From our construction, it is clear that the projection $P_q^k$ depends on $q$ holomorphically.

\end{proof}

Using this lemma, we are ready to construct the global holomorphic sections of Hodge bundles over $\mathcal{T}^H$. 
For any $0\leq k\leq n$, we know that $\{\eta_0, \eta_1, \cdots, \eta_{f^{k}-1}\}$ is an adapted basis of the Hodge decomposition of $F^k_p$ for any $0\leq k\leq n$. Then we define the sections
\begin{align}\label{sections}
s_i: \mathcal{T}^H\rightarrow F^{k}, \quad q\mapsto (P_q^k)^{-1}(\eta_i)\in F^k_q\quad\text{for any}\quad 0\leq i\leq f^{k}-1.
\end{align}
%
Lemma \ref{globalTransversality} implies that $\{(P_q^k)^{-1}(\eta_i)\}_{i=0}^{f^k-1}$ form a basis of $F^k_q$ for any $q\in \mathcal{T}^H$. In fact, we have proved the following theorem for polarized and marked Calabi--Yau manifolds.
\begin{theorem}\label{trivial bundle}
For all $0\leq k\leq n$, the Hodge bundles $F^k$ over $\mathcal{T}^H$ are trivial bundles, and the trivialization can be obtained by $\{s_i\}_{0\leq i\leq f^k-1}$ which is defined in \eqref{sections}. In particular, the section $s_0: \mathcal{T}^H\to F^n$ is a global nowhere zero section of the Hodge bundle $F^n$ for Calabi--Yau manifolds.
\end{theorem}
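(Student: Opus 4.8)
The plan is to deduce the statement directly from Lemma \ref{globalTransversality}, together with the fact that for a Calabi--Yau manifold the top Hodge number $h^{n,0}$ equals $1$. Fix $0\le k\le n$ and recall that $F^k$ is the holomorphic vector bundle over $\mathcal{T}^H$ whose fiber at $q$ is $F^k_q$; it carries a natural holomorphic structure, for instance as the pullback under $\Phi^H$ of the tautological Hodge bundle on $D\subset\check D=G_{\mathbb C}/B$. The sections $s_0,\dots,s_{f^k-1}$ defined in \eqref{sections} are $s_i(q)=(P_q^k)^{-1}(\eta_i)$, where $P_q^k\colon F^k_q\to F^k_p$ is the projection along the Hodge decomposition at the fixed base point $p$.

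First I would check that each $s_i$ is holomorphic. By Lemma \ref{globalTransversality} the map $P^k_q$ is an isomorphism depending holomorphically on $q$; since matrix inversion is holomorphic on $\mathrm{GL}$, the inverse $(P^k_q)^{-1}\colon F^k_p\to F^k_q$ also depends holomorphically on $q$, so $q\mapsto (P^k_q)^{-1}(\eta_i)$ is a holomorphic section of $F^k$. Next, because $\{\eta_0,\dots,\eta_{f^k-1}\}$ is a basis of $F^k_p$ and $(P^k_q)^{-1}$ is a linear isomorphism onto $F^k_q$, the vectors $\{s_0(q),\dots,s_{f^k-1}(q)\}$ form a basis of $F^k_q$ for every $q\in\mathcal{T}^H$. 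Hence the bundle map
\[
\Theta\colon \mathcal{T}^H\times\mathbb{C}^{f^k}\longrightarrow F^k,\qquad \bigl(q,(a_0,\dots,a_{f^k-1})\bigr)\longmapsto \sum_{i=0}^{f^k-1}a_i\,s_i(q),
\]
is holomorphic, linear on fibers and bijective on each fiber, so it is an isomorphism of holomorphic vector bundles; this exhibits $F^k$ as the trivial bundle of rank $f^k$, with trivializing frame $\{s_i\}$. For Calabi--Yau manifolds the Hodge decomposition has $\dim_{\mathbb C}H^{n,0}_{pr}(M)=h^{n,0}=1$, so $f^n=1$ and $F^n$ is a holomorphic line bundle; then the single section $s_0$ is a global nowhere-vanishing holomorphic section, which both trivializes $F^n$ and proves the last assertion. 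Since $i_{\mathcal T}\colon\mathcal T\to\mathcal T^H$ is holomorphic, pulling $\Theta$ back along $i_{\mathcal T}$ yields the analogous trivialization over the Teichm\"uller space.

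As for the main obstacle: once Lemma \ref{globalTransversality} is available there is essentially nothing left but bookkeeping, so the genuine content has already been placed in that lemma and in the linear-algebra discussion preceding it --- namely the observation, ultimately a consequence of Theorem \ref{injectivityofPhiH}, that $\Phi^H(q)\in N_+\cap D$ forces every principal submatrix $[A^{i,j}(q)]_{0\le i,j\le n-k}$ of the transition matrix to be nonsingular, which is exactly what makes $P^k_q$ an isomorphism. The only point requiring a little care in the present argument is to make sure the holomorphic structure on $F^k$ being used (pullback of the Hodge bundle on $D$) is the one with respect to which the holomorphic dependence of $P^k_q$ in Lemma \ref{globalTransversality} is asserted, so that $\Theta$ is genuinely a morphism of holomorphic bundles; this is immediate from the construction of $A(q)$ as the transition matrix between the fixed adapted basis $\{\eta_i\}$ and a holomorphically varying adapted basis of $\{F^k_q\}$.
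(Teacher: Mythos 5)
Your argument is correct and follows the same route as the paper: the paper likewise deduces the theorem immediately from Lemma \ref{globalTransversality}, observing that $\{(P_q^k)^{-1}(\eta_i)\}_{i=0}^{f^k-1}$ is a holomorphically varying frame of $F^k_q$, with the nowhere-vanishing of $s_0$ coming from $f^n=h^{n,0}=1$. Your write-up merely makes explicit the bookkeeping (holomorphicity of matrix inversion, the bundle isomorphism $\Theta$) that the paper leaves implicit.
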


With the adapted basis at the base point $p\in \T^{H}$, we can also see $\Phi_*(\text{T}^{1,0}_{p}(\T))=\mathfrak{a}\subset \mathfrak{n}_{+}$ as a block lower triangle matrix whose diagonal elements are zero. Moreover by local Torelli theorem for Calabi--Yau manifolds, we can conclude that $\mathfrak{a}$ is isomorphic to its $(1,0)$-block as vector spaces, see \eqref{block} for the definition. Let $(\tau_{1},\cdots ,\tau_{N})^{T}$ be the $(1,0)$-block of $\mathfrak{a}$. 
Sine the affine structure on $A$ is induced by $\exp:\, \mathfrak{a}\to A$ which is an isomorphism, $(\tau_{1},\cdots ,\tau_{N})^{T}$ also defines a global affine structure on $A$, and hence on $\T^{H}$. We denote it by
$$\tau^{H}:\, \T^{H} \to \C^{N},\quad  q\mapsto (\tau_{1}(q),\cdots ,\tau_{N}(q)).$$
Note that from linear algebra, it is easy to see that the $(1,0)$-block of $A=\exp(\mathfrak{a})$ is still $(\tau_{1},\cdots ,\tau_{N})^{T}$. Hence the affine map defined as above can be constructed as the $(1,0)$-block of the image of the period map.
To be precise, let $P^{1,0}:\, N_{+}\to \C^{N}$  be the projection of the matrices in $N_{+}$ onto their $(1,0)$-blocks. Then the affine map is
$$\tau^{H}=P^{1,0}\circ \Phi^{H}: \, \T^{H} \to \C^{N}.$$
Moreover $\tau^{H}$ is injective, and hence it defines another embedding of $\T^{H}$ into $\C^{N}$.

By using the local deformation theory for Calabi--Yau manifolds in \cite{tod1}, Todorov constructed a canonical local holomorphic section of the line bundle $F^n$ over the local deformation space of a Calabi--Yau manifold. In fact, let $\Omega_p$ be a holomorphic $(n,0)$-form on the central fiber $M_p$ of the family. Then there exists a coordinate chart $\{U_p, (\tau_1,\cdots, \tau_N)\}$ around the base point $p$ and a basis $\{\phi_1,\cdots, \phi_N\}$ of harmonic Beltrami differentials $\mathbb{H}^{0,1}(M_p, T^{1,0}M_p)$, such that
\begin{align}\label{can10}
\Omega^c_p(\tau)=e^{\phi(\tau)}\lrcorner\Omega_p,
\end{align}
is a family of holomorphic $(n,0)$-forms over $U_p$. We can assume this local coordinate chart is the same as the affine coordinates at $p$ we constructed as above, which can be achieved simply by taking $p$ as the base point. The Kuranishi family of Beltrami differentials $\phi(\tau)$ satisfyies the integrability equation $\bar{\partial} \phi(\tau) = \frac{1}{2} [\phi(\tau),\phi(\tau)]$ and the gauge condition $\bar{\partial}^*\phi(\tau)=0$ which is solvable for Calabi--Yau manifolds by the Tian-Todorov lemma, and  the Taylor expansion $\phi(\tau)=\sum_{i=1}^N\phi_i\tau_i+O(|\tau|^2)$ converges for $|\tau|$ small by classical Kodaira-Spencer theory. Then one may conclude the following lemma, and the detailed proof of the following lemma can be found in \cite[pp.12--14]{CGL}, or \cite[Proposition 5.1]{LRY}.  
\begin{lemma}\label{expcoh}
Let $\Omega^c_p(\tau)$ be a canonical family defined by
\eqref{can10}.
Then we have the following section of $F^n$ over $U_p$,
\begin{eqnarray}\label{cohexp10}
[\Omega^c_p(\tau)]=[\Omega_p]+\sum_{i=1}^N
\tau_i[\phi_i\lrcorner\Omega_p]+A(\tau),
\end{eqnarray}
where $\{[\phi_i\lrcorner\Omega_p]\}_{i=1}^N$ give a basis of $H^{n-1,1}(M_p)$, and $$A(\tau)=O(|\tau|^2)\in\bigoplus_{k=2}^n H^{n-k,k}(M_p)$$ denotes terms of
order at least $2$ in $\tau$.
\end{lemma}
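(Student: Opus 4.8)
The plan is to expand the canonical form into pieces of pure type with respect to the complex structure of $M_p$, and then read off the Hodge components of its de Rham class by harmonic projection. First I would write the finite sum
\[
e^{\phi(\tau)}\lrcorner\Omega_p=\sum_{k=0}^n\frac{1}{k!}\,\phi(\tau)^k\lrcorner\Omega_p,
\]
observing that the $k$-th summand is a smooth form of type $(n-k,k)$ on $M_p$ and, because $\phi(\tau)=\sum_{i=1}^N\tau_i\phi_i+O(|\tau|^2)$ with each $\phi_i$ harmonic, is of order at least $k$ in $\tau$. Thus the $(n,0)$-piece is exactly $\Omega_p$, the $(n-1,1)$-piece is the $k=1$ term $\phi(\tau)\lrcorner\Omega_p=\sum_{i=1}^N\tau_i(\phi_i\lrcorner\Omega_p)+O(|\tau|^2)$, and every piece of type $(n-k,k)$ with $k\geq 2$ is $O(|\tau|^2)$. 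Since by construction of the canonical family $\Omega^c_p(\tau)$ is holomorphic on the deformed manifold $M_\tau$, it is $d$-closed as a form on the fixed underlying smooth manifold, so $[\Omega^c_p(\tau)]\in H^n(M,\mathbb{C})$ is well defined and, by the convergence of the Kuranishi family recalled before the lemma, depends holomorphically on $\tau$; this already yields the asserted holomorphic section of $F^n$ over $U_p$.

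Second, I would compute the Hodge components of $[\Omega^c_p(\tau)]$ relative to the Hodge structure of $M_p$ using the standard fact that, on a compact K\"ahler manifold, the $H^{r,n-r}$-component of a de Rham class is represented by the harmonic projection of the pure type-$(r,n-r)$ part of any $d$-closed representative. Taking the harmonic projection $\mathbb{H}_p$ of the expansion componentwise --- and using that for the Ricci-flat K\"ahler metric of Yau the form $\Omega_p$ is parallel, so contraction with $\Omega_p$ is an isometry commuting with $\bar\partial_p$, $\bar\partial_p^{*}$ and the Laplacian --- gives: the $(n,0)$-component equals $[\Omega_p]$; the $(n-1,1)$-component equals $[\mathbb{H}_p(\phi(\tau)\lrcorner\Omega_p)]$; and the pieces of type $(n-k,k)$ for $k\geq 2$ contribute classes in $H^{n-k,k}(M_p)$ of order $O(|\tau|^2)$, which I would collect into $A(\tau)$.

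The \emph{one point requiring care} is to show that the $(n-1,1)$-component is exactly $\sum_{i=1}^N\tau_i[\phi_i\lrcorner\Omega_p]$, with no higher-order correction. For this I would invoke the structure of the Kuranishi solution: the gauge condition $\bar\partial_p^{*}\phi(\tau)=0$, together with the integrability equation and the Tian--Todorov lemma, forces every term of order $\geq 2$ in the power series of $\phi(\tau)$ to lie in $\operatorname{im}(\bar\partial_p^{*})$, whence $\mathbb{H}_p\phi(\tau)=\sum_{i=1}^N\tau_i\phi_i$ exactly; contracting with the parallel form $\Omega_p$ preserves harmonicity and this identity, so $\mathbb{H}_p(\phi(\tau)\lrcorner\Omega_p)=\sum_{i=1}^N\tau_i(\phi_i\lrcorner\Omega_p)$ with each $\phi_i\lrcorner\Omega_p$ harmonic of type $(n-1,1)$. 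Finally, the basis statement follows because contraction with the nowhere-vanishing form $\Omega_p$ induces an isomorphism $H^1(M_p,T^{1,0}M_p)\xrightarrow{\ \sim\ }H^{n-1,1}(M_p)$ carrying the basis $\{[\phi_i]\}_{i=1}^N$ of $\mathbb{H}^{0,1}(M_p,T^{1,0}M_p)$ to $\{[\phi_i\lrcorner\Omega_p]\}_{i=1}^N$. The main obstacle is precisely this third step: ruling out a hidden $(n-1,1)$-class in the $O(|\tau|^2)$ tail of $\phi(\tau)\lrcorner\Omega_p$, which is exactly where the gauge-fixing and the Tian--Todorov lemma enter; everything else is a routine application of Hodge theory on the Ricci-flat manifold $M_p$.
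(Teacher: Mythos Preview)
The paper does not give its own proof of this lemma; it merely cites \cite[pp.~12--14]{CGL} and \cite[Proposition 5.1]{LRY}. Your argument is the standard one found in those references: decompose $e^{\phi(\tau)}\lrcorner\Omega_p$ by Hodge type on $M_p$, pass to cohomology via harmonic projection using that $\Omega_p$ is parallel for the Ricci-flat metric, and identify the linear $(n-1,1)$-term. So in substance your approach matches what the paper invokes.

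One point of phrasing is slightly off and worth tightening. You write that the gauge condition $\bar\partial_p^{*}\phi(\tau)=0$ together with integrability and Tian--Todorov ``forces'' every term of order $\geq 2$ in $\phi(\tau)$ to lie in $\operatorname{im}(\bar\partial_p^{*})$. Strictly speaking the gauge condition only gives $\phi_{\geq 2}\in\ker\bar\partial_p^{*}=\mathbb{H}\oplus\operatorname{im}\bar\partial_p^{*}$, and integrability alone does not kill a possible harmonic component; rather, it is the specific recursive construction $\phi_I=\tfrac12\,\bar\partial_p^{*}G\!\left(\sum_{J+K=I}[\phi_J,\phi_K]\right)$ (which is what the paper means by ``the'' Kuranishi family, and which is made possible by the Tian--Todorov lemma) that places the nonlinear terms in $\operatorname{im}(\bar\partial_p^{*})$ by fiat. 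Once you say it that way, your conclusion $\mathbb{H}_p\phi(\tau)=\sum_i\tau_i\phi_i$ and hence $\mathbb{H}_p(\phi(\tau)\lrcorner\Omega_p)=\sum_i\tau_i(\phi_i\lrcorner\Omega_p)$ is exactly right, and the rest of your proof goes through without change.
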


Using the same notation as in Lemma \ref{expcoh}, we are ready to prove the following theorem for Calabi--Yau manifolds,
\begin{theorem}\label{globalExp}
Choose $[\Omega_p]=\eta_0$, then the section $s_0$ of $F^n$ is a global holomorphic extension of the local section $[\Omega^c_p(\tau)]$.
\end{theorem}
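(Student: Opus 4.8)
The plan is to show that the two holomorphic sections $s_0$ and $[\Omega^c_p(\tau)]$ of the line bundle $F^n$ agree on the coordinate chart $U_p$ around $p$, and then invoke the identity theorem for holomorphic sections on the connected manifold $\mathcal{T}^H$ to conclude that $s_0$ extends $[\Omega^c_p(\tau)]$ globally. Since $s_0$ is already a globally defined holomorphic section by Theorem \ref{trivial bundle}, the whole content is the local identity $s_0(q)=[\Omega^c_p(\tau(q))]$ for $q\in U_p$.

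First I would unwind the definition of $s_0$ at a point $q$ near $p$. By \eqref{sections}, $s_0(q)=(P^n_q)^{-1}(\eta_0)$, where $P^n_q:\,F^n_q\to F^n_p$ is the projection along the Hodge decomposition at $p$. Concretely, $F^n_p=\mathbb{C}\eta_0$ and $F^n_q=\mathbb{C}[\Omega^c_p(\tau)]$, so $s_0(q)$ is the unique element of $F^n_q$ whose component in $H^{n,0}_p$ equals $\eta_0$. On the other hand, the Taylor expansion in Lemma \ref{expcoh} shows that $[\Omega^c_p(\tau)]=[\Omega_p]+\sum_i\tau_i[\phi_i\lrcorner\Omega_p]+A(\tau)$ with $[\Omega_p]=\eta_0\in H^{n,0}_p$ and all remaining terms lying in $\bigoplus_{k\geq 1}H^{n-k,k}_p$. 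Hence the $H^{n,0}_p$-component of $[\Omega^c_p(\tau)]$ is exactly $\eta_0$, i.e. $P^n_q([\Omega^c_p(\tau)])=\eta_0$. By uniqueness of the preimage (Lemma \ref{globalTransversality} guarantees $P^n_q$ is an isomorphism), this forces $s_0(q)=[\Omega^c_p(\tau)]$ on $U_p$.

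The remaining point is compatibility of the trivializations: one must check that the adapted basis used to define $s_0$ and the canonical representative $e^{\phi(\tau)}\lrcorner\Omega_p$ are being compared inside the \emph{same} fixed vector space $H^n(M,\mathbb{C})$, using the markings to identify cohomologies of nearby fibers. This is where the hypothesis ``we can assume the local coordinate chart is the same as the affine coordinates at $p$'' is used, together with the identification of $\Phi^H$ with the matrix $\tilde A(q)\in N_+$ whose first column records precisely the expansion of $[\Omega^c_p(\tau)]$ in the adapted basis $\{\eta_i\}$; the block-lower-triangular-with-identity-diagonal form of elements of $N_+$ is exactly the statement that the leading term is $\eta_0$. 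So the affine coordinates $\tau^H=P^{1,0}\circ\Phi^H$ agree with the Kuranishi coordinates $\tau$ up to the chosen normalization, and the $(1,0)$-block of $\Phi^H(q)$ is $(\tau_1,\dots,\tau_N)^T$, matching the linear term $\sum_i\tau_i[\phi_i\lrcorner\Omega_p]$ in Lemma \ref{expcoh}.

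I expect the main obstacle to be purely bookkeeping rather than conceptual: carefully tracking the marking-induced identifications so that ``$s_0(q)$'' and ``$[\Omega^c_p(\tau)]$'' are literally the same cohomology class in $H^n(M,\mathbb{C})$, and verifying that the local holomorphic coordinate from Kuranishi theory can indeed be taken equal to the affine coordinate $\tau^H$ near $p$ (which amounts to matching the first column of the unipotent matrix $\Phi^H(q)$ with the expansion \eqref{cohexp10}). Once these identifications are pinned down, the projection-component argument is immediate, and the global conclusion follows since $F^n$ is a holomorphic line bundle over the connected complex manifold $\mathcal{T}^H$, so two holomorphic sections agreeing on the open set $U_p$ agree everywhere; equivalently, $s_0$ restricted to $U_p$ is $[\Omega^c_p(\tau)]$ and $s_0$ is the desired global extension.
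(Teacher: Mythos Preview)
Your proof is correct and follows essentially the same approach as the paper: both reduce to showing $s_0|_{U_p}=[\Omega^c_p(\tau)]$ by reading off from the expansion \eqref{cohexp10} that $P^n_q([\Omega^c_p(\tau(q))])=\eta_0$, and then applying the invertibility of $P^n_q$ from Lemma~\ref{globalTransversality}. Your additional remarks on markings, coordinate compatibility, and the identity theorem are sound but go beyond what is needed, since the claim is only that the globally defined $s_0$ restricts to $[\Omega^c_p(\tau)]$ on $U_p$, not that two global sections coincide.
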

\begin{proof}
Because both $s_0$ and $[\Omega^c_p(\tau)]$ are holomorphic sections of $F^n$, we only need to show that $s_0|_{U_p}=[\Omega^c_p(\tau)]$. In fact, from the expansion formula \eqref{cohexp10}, we have that for any $q\in U_p$
\begin{align*}
P^n_q([\Omega^c_p(\tau(q))])=[\Omega_p]=\eta_0.
\end{align*}
Therefore, $[\Omega^c_p(\tau(q))]=(P^n_q)^{-1}(\eta_0)=s_0(q)$ for any point $q\in U_p$.
\end{proof}

\begin{example}
Let $\mathcal{T}^H$ be the Hodge completion of the Torelli space of hyprk\"ahler manifold, and $(\tau_1, \cdots, \tau_{N})$ be global affine coordinates with respect to the reference point $p$ and an orthonormal basis $\{\eta_1, \cdots, \eta_{N}\}$ of $H_{pr}^{1,1}(M_p)$, then
\begin{align*}
[\Omega_p^c(\tau)]=[\Omega_p]+\sum\limits_{i=1}^{19}\tau_i\eta_i+\left( \frac{1}{2}\sum\limits_{i=1}^{19}\tau_i^2 \right)[\overline{\Omega}_p],
\end{align*}
is a global holomorphic section of $F^2$ over $\mathcal{T}^H$. In fact, in this case, $\mathcal{T}^H$ is bi-holomorphic to $D$ given by the period map as discussed in Section 4.1, and the affine structure on $\mathcal{T}^H$ is induced from the affine structure on $D$ by the Harish-Chandra embedding of $D$ into the complex Euclidean space. The global affine coordinates on $\mathcal{T}^H$ is induced by the Harish-Chandra embedding.
\end{example}
Note that although we state and prove the results in this section over the completion space $\T^H$, by pulling back through the covering map $i_\T$, we see that these results still hold on the Teichm\"uller space $\T$.

\subsection{A global splitting property of the Hogde bundles}\label{split}

In this section, we will directly construct globally defined anti-holomorphic vector bundles $\tilde{F}^k$ over $\mathcal{T}^H$, such that the vector space $H^n(M,\mathbb{C})$ splits as
\begin{align*}
H^n(M, \mathbb{C})=F^k_p\oplus \tilde{F}^k_q\quad \text{for any } q\in \mathcal{T}^H,
\end{align*}
where $p$ is the base point in $\mathcal{T}^H$. Then as an application, in Theorem \ref{conjugate} we  prove that any two fibers of the universal family $\phi':\,\mathcal{U}'\to\mathcal{T}'$ over the Torelli space $\T'\subset \T^{H}$ can not be complex conjugate manifolds of each other. The construction of vector bundles $\tilde{F}^k$ is again based on Lemma \ref{globalTransversality}. Let us denote $\tilde{F}^k=\bar{{F}^{n-k+1}}$ for each $0\leq k\leq n$. Then we have the following equivalent lemma. 

\begin{lemma}\label{globalTransversality2}
For any $q\in\mathcal{T}^H$ and $1\leq k\leq n$, we have that $H^n(M, \mathbb{C})=F^k_p\oplus \bar{{F}^{n-k+1}_q}$. Moreover, for any different points $q$ and $q'$ on $\mathcal{T}^{H}$, and $1\leq k\leq n$, we have $H^n(M, \mathbb{C})=F^k_q\oplus \bar{{F}^{n-k+1}_{q'}}$.
\end{lemma}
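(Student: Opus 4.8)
The plan is to deduce Lemma~\ref{globalTransversality2} directly from Lemma~\ref{globalTransversality} together with the structure of the unipotent group $N_+$ and the Hodge--Riemann bilinear relations. First I would recall that by Theorem~\ref{injectivityofPhiH} every point of $\mathcal{T}^H$ has its Hodge filtration in $N_+\cap D$, so for any $q\in\mathcal{T}^H$ the filtration $\{F^k_q\}$ is represented by a nonsingular block lower triangular matrix $\widetilde A(q)$ with identity diagonal blocks relative to the fixed adapted basis $\{\eta_0,\dots,\eta_{m-1}\}$ at the base point $p$. The key linear-algebra fact, already established in the discussion preceding Lemma~\ref{globalTransversality}, is that all principal submatrices $[\widetilde A^{i,j}(q)]_{0\le i,j\le n-k}$ are nonsingular; equivalently, the projection $P^k_q:F^k_q\to F^k_p$ is an isomorphism for all $k$.

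The main step is to translate the direct sum statement $H^n(M,\mathbb C)=F^k_p\oplus\overline{F^{n-k+1}_q}$ into a nonvanishing-determinant condition that can be read off from Lemma~\ref{globalTransversality}. I would proceed as follows. Since $\dim_{\mathbb C}F^k_p=f^k$ and $\dim_{\mathbb C}\overline{F^{n-k+1}_q}=\dim_{\mathbb C}F^{n-k+1}_q=f^{n-k+1}$, and since $f^k+f^{n-k+1}=m$ (this follows from $h^{i,n-i}=h^{n-i,i}$ and the definition $f^k=\sum_{i\ge k}h^{i,n-i}$), the sum is direct if and only if $F^k_p\cap\overline{F^{n-k+1}_q}=0$. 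Now apply the Weil operator / Hodge--Riemann relation \eqref{cl50}: $Q(F^k,F^{n-k+1})=0$, so in particular $Q(F^k_p,\overline{F^{n-k+1}_p})=0$ would be the degenerate case at $q=p$; but at $q=p$ we instead have the genuine decomposition $H^n(M,\mathbb C)=F^k_p\oplus\overline{F^{n-k}_p}$ from \eqref{cl46}. The cleaner route is the matrix one: write $\overline{F^{n-k+1}_q}$ in terms of the conjugate basis $\{\bar\eta_0,\dots,\bar\eta_{m-1}\}$ and the conjugate matrix $\overline{\widetilde A(q)}$; because $\{\eta_i\}$ is adapted at $p$, the conjugate basis $\{\bar\eta_i\}$ is adapted for the conjugate Hodge decomposition at $p$, and the transition matrix between $\{\eta_i\}$ and $\{\bar\eta_i\}$ is block anti-triangular with nonsingular anti-diagonal blocks (this is exactly the content of $H^n=F^k_p\oplus\overline{F^{n-k}_p}$). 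Combining: the matrix whose columns are a basis of $F^k_p$ followed by a basis of $\overline{F^{n-k+1}_q}$ is, after the block-triangular factorization, a product of the (invertible) base-change matrices and a block triangular matrix with nonsingular diagonal blocks — hence invertible. This gives the first assertion.

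For the second assertion, $H^n(M,\mathbb C)=F^k_q\oplus\overline{F^{n-k+1}_{q'}}$ for distinct $q,q'$, the same argument works verbatim once one observes that $F^k_q$ and $F^k_p$ are related by the nonsingular block lower triangular matrix $\widetilde A(q)$ with identity diagonal blocks, and $\overline{F^{n-k+1}_{q'}}$ is related to $\overline{F^{n-k+1}_p}$ by $\overline{\widetilde A(q')}$, which is block upper triangular (the conjugate reverses the triangularity direction relative to the conjugate filtration) with identity diagonal blocks. So the relevant matrix is again a product of an invertible base change with a block-triangular matrix having nonsingular diagonal blocks coming from the $q=p=q'$ reference case, hence is invertible regardless of whether $q=q'$. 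I expect the main obstacle to be bookkeeping: getting the block index ranges right (matching $F^k$, whose blocks are indexed $0\le\alpha\le n-k$ in the convention of \eqref{block}, with $\overline{F^{n-k+1}}$ and keeping track of how complex conjugation acts on the block decomposition via \eqref{cl46}), and verifying carefully that the $q=p$ (resp.\ $q=q'=p$) case indeed furnishes the needed nonsingular diagonal blocks; once the conventions are pinned down, the nonsingularity is forced and no estimate or analytic input beyond Theorem~\ref{injectivityofPhiH} is needed.
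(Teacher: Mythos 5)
Your proposal is correct in its main line and follows the same basic strategy as the paper — reduce the direct sum to $F^k_p\cap\overline{F^{n-k+1}_q}=\{0\}$ using the fact (Theorem \ref{injectivityofPhiH}) that every Hodge filtration on $\mathcal{T}^H$ lies in the $N_+$-orbit of the reference filtration, then close with the dimension count $f^k+f^{n-k+1}=m$ — but you execute the intersection step by an explicit determinant computation rather than by quoting Lemma \ref{globalTransversality}. Your matrix route does go through: the transition matrix $C$ with $(\overline{\eta}_0,\dots,\overline{\eta}_{m-1})=(\eta_0,\dots,\eta_{m-1})C$ is block anti-diagonal with nonsingular square anti-diagonal blocks because $\overline{H^{a,b}_p}=H^{b,a}_p$, and multiplying $C$ on either side by the unipotent block lower triangular matrices $\tilde{A}(q)^{-1}$ and $\overline{\tilde{A}(q')}$ preserves the anti-triangular shape and the anti-diagonal blocks, which yields the required nonvanishing minor for both assertions (and, as you note, without needing $q\neq q'$). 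The paper is shorter: Lemma \ref{globalTransversality} makes $P^{n-k+1}_q:F^{n-k+1}_q\to F^{n-k+1}_p$ an isomorphism, hence its conjugate is an isomorphism onto $\overline{F^{n-k+1}_p}$, while every vector of $F^k_p$ projects to zero in $\overline{F^{n-k+1}_p}$ under the reference decomposition $H^n=F^k_p\oplus\overline{F^{n-k+1}_p}$; the second assertion then follows because the base point is arbitrary (this is how Corollary \ref{corOfTransversality} is stated). In effect your computation reproves Lemma \ref{globalTransversality} in coordinates instead of using it; what it buys is an explicit factorization, at the cost of block bookkeeping the paper avoids.

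Two slips should be corrected, though neither is fatal since you discard the route they occur on. First, the reference decomposition at $p$ is $H^n(M,\mathbb{C})=F^k_p\oplus\overline{F^{n-k+1}_p}$, which is \eqref{cl46}; you twice write $F^k_p\oplus\overline{F^{n-k}_p}$, which cannot be a direct sum because $f^k+f^{n-k}=m+h^{k,n-k}>m$. The correct statement is exactly the $q=p$ case of the lemma, and it is what the anti-diagonal structure of $C$ encodes, so your matrix argument needs it with the right index. Second, the aside on the Hodge--Riemann relation misreads \eqref{cl50}: $Q(F^k,F^{n-k+1})=0$ says nothing about $Q(F^k_p,\overline{F^{n-k+1}_p})$ — that pairing is in fact nondegenerate, which is another way of seeing \eqref{cl46} — and nothing degenerates at $q=p$. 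Once these are fixed, the proof is sound.
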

\begin{proof}
Firstly, the decomposition $H^n(M, \mathbb{C})=F^k_p\oplus \bar{{F}^{n-k+1}_p}$ follows from the definition of the Hodge structure for any $0\leq k\leq n$. Secondly Lemma \ref{globalTransversality} implies that $P_q^{n-k+1}: \, F^{n-k+1}_q\to F^{n-k+1}_p$ is an isomorphism for any $q\in\mathcal{T}^H$ and any $0\leq k\leq n$. Therefore $F^k_p\cap \bar{{F}^{n-k+1}_q}=\{0\}$ as the projection from $F^k_p$ to $\bar{F^{n-k+1}_p}$ is a zero map.

On the other hand, $\dim F^k_p + \dim \bar{{F}^{n-k+1}_q}=\dim F^k_p + \dim \bar{{F}^{n-k+1}_p}=\dim H^n(M, \mathbb{C})$, so we have that
$$H^n(M, \mathbb{C})=F^k_p\oplus \bar{{F}^{n-k+1}_q}.$$
\end{proof}
Because the reference point $p$ is an arbitrary prefixed point on $\mathcal{T}'$, and the Hodge filtration at each point does not depend on the choice of the reference point, Lemma \ref{globalTransversality2} actually implies,
\begin{corollary}\label{corOfTransversality}
For any different points $q$ and $q'$ on $\mathcal{T}'$, and $1\leq k\leq n$, we have $H^n(M, \mathbb{C})=F^k_q\oplus \bar{{F}^{n-k+1}_{q'}}$.
\end{corollary}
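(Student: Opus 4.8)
The plan is to derive Corollary \ref{corOfTransversality} from Lemma \ref{globalTransversality2}, the point being that the reference point played no distinguished role anywhere in the construction of the affine structure. First I would record that the base point $p$ fixed in Section \ref{affine structure} and kept throughout Theorem \ref{image}, Theorem \ref{injectivityofPhiH}, Lemma \ref{Riemannextension}, and Lemmas \ref{globalTransversality}--\ref{globalTransversality2} was an arbitrary point of $\mathcal{T}$, equivalently (via $i_{\mathcal{T}}$) of $\mathcal{T}^H$. Consequently the unipotent chart $N_+=N_+(p)$, the inclusion $\Phi^H(\mathcal{T}^H)\subset N_+(p)\cap D$, the isomorphism $P^k_q\colon F^k_q\to F^k_p$ of Lemma \ref{globalTransversality}, and hence the splitting $H^n(M,\mathbb{C})=F^k_p\oplus\overline{F^{n-k+1}_q}$ of Lemma \ref{globalTransversality2}, are all available with any prescribed base point.

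Granting this, the deduction is short. Given distinct $q,q'\in\mathcal{T}$ and $1\le k\le n$, run the construction with $q$ as base point. Because $\Phi=\Phi^H\circ i_{\mathcal{T}}$, the filtrations $F^\bullet_q$ and $F^\bullet_{q'}$, viewed inside the fixed vector space $H^n(M,\mathbb{C})$, coincide with the filtrations at $i_{\mathcal{T}}(q),i_{\mathcal{T}}(q')\in\mathcal{T}^H$, so the first assertion of Lemma \ref{globalTransversality2}, applied with base point $i_{\mathcal{T}}(q)$, gives $H^n(M,\mathbb{C})=F^k_q\oplus\overline{F^{n-k+1}_{q'}}$. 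If $i_{\mathcal{T}}(q)=i_{\mathcal{T}}(q')$, or if $q=q'$, the identity is simply the Hodge decomposition at $q$. Equivalently one may invoke the ``moreover'' clause already recorded in Lemma \ref{globalTransversality2} and restrict it to $\mathcal{T}$ through $i_{\mathcal{T}}$, using that complex conjugation on $H^n(M,\mathbb{C})$ respects the markings. The remaining bookkeeping, constancy of $\dim F^k$ on the connected manifold and compatibility of the real structure with the marking, is routine and already subsumed in the cited lemmas.

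The one point that is not pure bookkeeping, and what I would regard as the main obstacle, is justifying the arbitrariness of the base point, namely that $\Phi^H(\mathcal{T}^H)\subset N_+(q)\cap D$ for \emph{every} $q$ rather than only for a generic one. In Section \ref{affine structure} the genericity of the base point was used only to conclude that $\mathcal{T}\setminus\check{\mathcal{T}}$ is a divisor; but the Riemann extension step requires nothing more than that the complement of $\check{\mathcal{T}}(q):=\Phi^{-1}(N_+(q)\cap D)$ in $\mathcal{T}$, which equals $\Phi^{-1}\bigl(\check{D}\setminus N_+(q)\bigr)$, be a proper analytic subset. Now $\check{D}\setminus N_+(q)$, the complement of the big Schubert cell, i.e.\ of the $N_+(q)$-orbit of the Hodge flag at $q$, is always a proper analytic subvariety of $\check{D}$, and the Hodge flag at $q$ itself lies in $N_+(q)$, so $\Phi(\mathcal{T})$ is not contained in that complement. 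Hence the boundedness of $\Phi$ on $\check{\mathcal{T}}(q)$, the Riemann extension of Theorem \ref{image}, Lemma \ref{Riemannextension}, and the injectivity in Theorem \ref{injectivityofPhiH} go through verbatim with $q$ as base point, and the corollary follows.
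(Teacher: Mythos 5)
Your proposal is correct and follows exactly the paper's route: the paper derives the corollary in one line by observing that the base point $p$ was an arbitrary prefixed point and then invoking Lemma \ref{globalTransversality2} with $q$ as the new base point. Your additional care in checking that the whole construction (the chart $N_+(q)$, the Riemann extension, the injectivity of $\Phi^H$) genuinely works for \emph{every} base point rather than only a generic one is a worthwhile elaboration of a step the paper leaves implicit, but it is not a different argument.
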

\begin{theorem}\label{last}
The vector bundles $\{\tilde{F}^k\}_{k=0}^n$ are globally defined anti-holomorphic vector bundles over $\mathcal{T}^H$ such that
$H^n(M, \mathbb{C})=F^k_p\oplus \tilde{F}^k_q$ for any $q\in \mathcal{T}^H.$
\end{theorem}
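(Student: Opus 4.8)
The plan is to recognize that the splitting statement is just Lemma \ref{globalTransversality2} read fiberwise, so the genuine content of the theorem is only the claim that the subspaces $\tilde{F}^k_q:=\overline{F^{n-k+1}_q}\subseteq H^n(M,\mathbb{C})$ fit together into a globally defined anti-holomorphic vector bundle over $\mathcal{T}^H$. For $1\leq k\leq n$ the decomposition $H^n(M,\mathbb{C})=F^k_p\oplus\overline{F^{n-k+1}_q}$ for every $q\in\mathcal{T}^H$ is exactly the first assertion of Lemma \ref{globalTransversality2}, and for the degenerate index $k=0$ one has $F^{n+1}=0$, so $\tilde{F}^0$ is the zero bundle and $F^0_p=H^n(M,\mathbb{C})$, making the claimed splitting trivially true. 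Thus no new computation is needed for the direct-sum part.

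For the bundle structure I would argue as follows. By Theorem \ref{trivial bundle} the Hodge bundle $F^{n-k+1}$ over $\mathcal{T}^H$ is a trivial holomorphic subbundle of the flat bundle with fiber $H^n(M,\mathbb{C})$, globally framed by the holomorphic sections $s_0,\dots,s_{f^{n-k+1}-1}$ constructed in \eqref{sections}. Complex conjugation is well defined on $H^n(M,\mathbb{C})$ via the $q$-independent real structure $H^n(M,\mathbb{R})$, so applying it to this frame produces sections $\overline{s_0},\dots,\overline{s_{f^{n-k+1}-1}}$ which span $\overline{F^{n-k+1}_q}=\tilde{F}^k_q$ at every $q$ and which depend anti-holomorphically on $q$, being conjugates of holomorphic sections of the flat bundle. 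Hence $\tilde{F}^k$ is a globally defined anti-holomorphic subbundle of the flat bundle, in fact trivial as such, and the theorem follows by combining this with the fiberwise splitting above.

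I do not expect a real obstacle here: the argument is essentially bookkeeping built on Lemma \ref{globalTransversality2} and Theorem \ref{trivial bundle}. The only point deserving a word of care is to confirm that \emph{anti-holomorphic} is the precise structure obtained, that is, that the conjugated frame $\{\overline{s_i}\}$ (equivalently the transition data of $\tilde{F}^k$) varies anti-holomorphically in $q$; this is immediate once one notes that the $s_i$ are holomorphic and that the real structure on $H^n(M,\mathbb{C})$ does not move with $q$.
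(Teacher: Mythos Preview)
Your proposal is correct and matches the paper's approach: the paper does not give a separate proof of Theorem~\ref{last} at all, having already defined $\tilde{F}^k=\overline{F^{n-k+1}}$ and proved the splitting as Lemma~\ref{globalTransversality2}, so the theorem is presented as an immediate summary of what precedes it. Your explicit verification of the anti-holomorphic bundle structure via the conjugated frame $\{\overline{s_i}\}$ from Theorem~\ref{trivial bundle} simply spells out the detail the paper leaves implicit.
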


Now we let $M$ be a complex manifold with background differential manifold $X$ and complex structure $J:\, T_{\mathbb{R}}X\to T_{\mathbb{R}}X$, then the complex conjugate manifold $\bar{M}$ is a complex manifold with the same background differential manifold $X$ and with conjugate complex structure $-J$. In fact, $M$ and its complex conjugate manifold $\bar{M}$ satisfy the relation $T^{1,0}M=T^{0,1}\bar{M}$ and $T^{0,1}M=T^{1,0}\bar{M}$. 

Problems regarding deformation inequvalent complex conjugated complex structures have been studied before. For example one may find interesting results in \cite{KK}. We will apply our results to study such problem for polarized and marked Calabi--Yau manifolds. In fact, another interesting application of Lemma \ref{globalTransversality2} is that a polarized and marked Calabi--Yau manifold $M$ can not be connected to its complex conjugate manifold $\bar{M}$ by deformation of complex structure. In fact, for any two points $q$ and $q'$ in the Torelli space $\mathcal{T}'$, let $M_q$ and $M_{q'}$ denote the fibers of the universal family $\phi':\, \mathcal{U}'\to \mathcal{T}'$ over the points $q$ and $q'$ respectively. Then we have the following theorem.

\begin{theorem}\label{conjugate}
If $q$ and $q'$ are two points in the Torelli space $\mathcal{T}'$, then $M_{q'}\neq\bar{M_{q}}$.
\end{theorem}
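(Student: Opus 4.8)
The plan is to reduce the statement to the splitting in Lemma~\ref{globalTransversality2} (equivalently Corollary~\ref{corOfTransversality}) combined with the elementary Hodge theory of a complex conjugate manifold, and then to extract a contradiction from the fact that the Hodge filtration is \emph{decreasing}.

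\textbf{Step 1 (Hodge structure of $\bar M_q$).} First I would record that, since $\bar M_q$ has the same underlying smooth manifold $X$ as $M_q$ but complex structure $-J$, a $(p,q)$-form for $-J$ is precisely a $(q,p)$-form for $J$. Hence $H^{k,n-k}(\bar M_q)=H^{n-k,k}(M_q)=\overline{H^{k,n-k}(M_q)}$ as subspaces of $H^n(X,\mathbb{C})$, and therefore $F^k(\bar M_q)=\overline{F^k(M_q)}$ for all $0\le k\le n$, the bar being conjugation with respect to $H^n(X,\mathbb{R})$. In particular, if $\Omega_q$ spans $H^{n,0}(M_q)=F^n_q$, then $\overline{\Omega_q}$ is a nowhere vanishing $(-J)$-holomorphic $(n,0)$-form, so $\bar M_q$ is again a polarized Calabi--Yau manifold with $F^n(\bar M_q)=\mathbb{C}\,\overline{\Omega_q}=\overline{F^n_q}$; this is what makes the equality ``$M_{q'}=\bar M_q$'' meaningful as a statement in $\mathcal{T}'$.

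\textbf{Step 2 (translation of the hypothesis and the contradiction).} Next I would assume, for contradiction, that $M_{q'}=\bar M_q$ in $\mathcal{T}'$, i.e.\ there is a biholomorphism $f:\,M_{q'}\to\bar M_q$ compatible with polarizations and markings. Since $f$ is biholomorphic, $f^*$ carries $F^k(\bar M_q)$ onto $F^k(M_{q'})$; since a marking is a lattice isometry, hence defined over $\mathbb{Z}$, it commutes with complex conjugation, so transporting $H^n(M_{q'},\mathbb{C})$ and $H^n(\bar M_q,\mathbb{C})$ to the fixed cohomology $H^n(M,\mathbb{C})$ via the markings gives $F^n_{q'}=F^n(\bar M_q)=\overline{F^n_q}$, and therefore $\overline{F^n_{q'}}=F^n_q$. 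Now I would apply Lemma~\ref{globalTransversality2} with $k=1$ to the points $q,q'\in\mathcal{T}'$ (when $q=q'$ this is just the standard fact \eqref{cl46} with base point $q$), obtaining $H^n(M,\mathbb{C})=F^1_q\oplus\overline{F^n_{q'}}=F^1_q\oplus F^n_q$. But $F^n_q\subseteq F^1_q$ because the filtration decreases, so the right-hand side is merely $F^1_q$, a proper subspace of $H^n(M,\mathbb{C})$ since $\dim F^1_q=m-h^{0,n}<m$; equivalently, directness of the sum would force $F^n_q=0$, contradicting $\dim F^n_q=h^{n,0}=1$ for a Calabi--Yau manifold. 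Hence $M_{q'}\neq\bar M_q$.

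\textbf{Main obstacle.} The only point that is not purely formal is the translation in Step~2: converting the identification ``$M_{q'}=\bar M_q$ in the Torelli space'' into the relation $\overline{F^n_{q'}}=F^n_q$ inside the fixed cohomology $H^n(M,\mathbb{C})$ requires careful bookkeeping with the markings on $M_q$, $M_{q'}$ and $\bar M_q$ (and, if one insists that $\bar M_q$ genuinely lie in a period domain, with the orientation sign $\int_{\bar M_q}=(-1)^n\int_{M_q}$, which however never enters the filtration-level computation). Once that relation is in hand, the rest — quoting the splitting lemma and observing that a decreasing filtration cannot satisfy $F^1_q\oplus F^n_q=H^n(M,\mathbb{C})$ — is immediate.
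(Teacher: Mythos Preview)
Your proof is correct and follows essentially the same route as the paper's: both assume $M_{q'}=\bar{M_q}$, deduce $F^n_q=\overline{F^n_{q'}}$, and obtain a contradiction with the splitting of Corollary~\ref{corOfTransversality} using the inclusion of filtration pieces. The only cosmetic difference is that you invoke the splitting with $k=1$ (getting $H^n=F^1_q\oplus\overline{F^n_{q'}}$ and noting $\overline{F^n_{q'}}=F^n_q\subset F^1_q$), whereas the paper uses $k=n$ (getting $H^n=F^n_q\oplus\overline{F^1_{q'}}$ and noting $F^n_q=\overline{F^n_{q'}}\subset\overline{F^1_{q'}}$); these are symmetric instances of the same contradiction, and your added care with markings and the $q=q'$ case only makes the argument more complete.
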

\begin{proof}
We prove this theorem by contradiction. Suppose $M_{q'}= \bar{M_{q}}$, and let $\Omega$ be an $(n, 0)$ holomorphic form on $M_q$, then $\bar{\Omega}$ is a $(n, 0)$ holomorphic form on $M_{q'}=\bar{M_q}$. Therefore the fibers of Hodge bundles over the two points satisfy $F^{n}_q=\overline{F^n_{q'}}\subset \bar{F^{1}_{q'}}$, therefore
\begin{align*}
H^n(M,\mathbb{C})\neq F^n_q\oplus\bar{F^{1}_{q'}}.
\end{align*}
But this contradicts to Corollary \ref{corOfTransversality}, so $M_{q'}\neq \bar{M_{q}}$ as desired.
\end{proof}
The same result also holds on the Teichm\"uller space.


\end{document}